    \newenvironment{indent_paragraph}[1]%
     {\vspace{-0.3cm}
     \begin{list}{}%
             {\setlength{\leftmargin}{#1}}%
             \item[]%
     }
     {\end{list}\vspace{-0.3cm}}
\newcommand{\R}{\mathbb{R}}
\newcommand{\real}{\mathbb{R}}
\newcommand{\commentout}[1]{}
\newcommand{\PP}{\mathcal P}
\newtheorem{lemma}{Lemma}
\newtheorem{remark}{Remark}
\begin{document}

\title{Confinement for Repulsive-Attractive Kernels}

\author{D. Balagu\'e $^1$, J. A. Carrillo$^2$, and Y. Yao$^3$}

\date{}

\maketitle

\begin{center}
\small{$^1$ Departament de Matem\`a\-ti\-ques,
Universitat Aut\`onoma de Barcelona,\\ E-08193 Bellaterra, Spain.}\\
\small{Email: \texttt{dbalague@mat.uab.cat}}

\

\small{$^2$ Department of Mathematics, Imperial College
London, London SW7 2AZ, UK.\\
\small{Email: \texttt{carrillo@imperial.ac.uk}}}

\

\small{$^3$ Department of Mathematics, University of Wisconsin, Madison, WI 53706,  USA.}\\
\small{Email: \texttt{yaoyao@math.wisc.edu}}
\end{center}

\begin{abstract}
We investigate the confinement properties of solutions of the
aggregation equation with repulsive-attractive potentials. We show
that solutions remain compactly supported in a large fixed ball
depending on the initial data and the potential. The arguments
apply to the functional setting of probability measures with
mildly singular repulsive-attractive potentials and to the
functional setting of smooth solutions with a potential being the
sum of the Newtonian repulsion at the origin and a smooth suitably
growing at infinity attractive potential.
\end{abstract}

\section{Introduction}
In this paper, we want to address ``confinement'' properties of
solutions to the nonlocal interaction equation
\begin{equation}
\rho_t = \nabla \cdot (\rho (\nabla W*\rho)), \qquad x \in \R^N,
t>0, \label{pde}
\end{equation}
with compactly supported initial data $\rho(0,\cdot)=\rho_0$ in a
functional space to be specified. These nonlocal equations appear
in many instances of mathematical biology \cite{ME99,TB,
TBL,F-actin, Stevens1,Stevens2}, mathematical physics
\cite{BenedettoCagliotiPulvirenti97,CMRV2,tosc:gran:00,LT}, and
material science \cite{Weinan:1994,SandierSerfaty,
SandierSerfatybook,LinZhang, AS,AMS,Mainini,DZ, Masmoudi}. They
are minimal models for the interaction of particles/agents through
pairwise potentials.

We say that the nonlocal equation \eqref{pde} satisfies a
confinement property in certain functional setting if every
solution $\rho(t,\cdot)$ to \eqref{pde} in that setting with
compactly supported initial data $\rho_0$ is compactly supported
for all times and its support lies in a fixed ball whose radius
only depends on $\rho_0$ and $W$.

In most of the mentioned applications, particles/agents repel to
each other in a short length scale while there is an overall
attraction in larger length scales. Therefore, we can typically
concentrate on repulsive-attractive potentials $W$ as in
\cite{KSUB,BUKB,FHK,soccerball,KHP,HuiUminskyBertozzi,Raoul,
FellnerRaoul1,FellnerRaoul2,BCLR,BCLR2}. These potentials lead to
a rich ensemble of compactly supported steady states whose
stability has recently been analyzed
\cite{KSUB,BUKB,soccerball,BCLR,BCLR2}. While the existence of
these compactly supported stable stationary states is a good
indication of confinement properties for these
repulsive-attractive potentials, it is not equivalent to
confinement. Let us finally mention, that repulsive-attractive
potentials have also been used in second order models for swarming
\cite{Dorsogna,Dorsogna2,Dorsogna3,CMP} where exponential decaying
at infinity potentials are more suitable from the modelling
viewpoint.

Confinement properties were addressed in \cite{CDFLS2} taking
advantage of the well-posedness theory for weak measure solutions
of \eqref{pde} developed in \cite{CDFLS}. Using the continuity
with respect to initial data in the functional setting of
probability measures $\PP (\R^N)$, the authors reduced the
confinement of the solutions to \eqref{pde} to a similar
confinement property for solutions of the associated particle ode
system:
\begin{equation}\label{odes}
\dot x_i = -\sum_{j\in Z(i)} m_j \nabla W(x_i - x_j), \quad i =
1,\ldots, n,
\end{equation}
where $Z(i) = \{j \in \{1, \ldots, n\}: j\neq i, x_j(t) \neq
x_i(t)\}$, $x_i(0)\in \R^N$ for all $i=1,\ldots,n$, $0\leq m_i\leq
1$, and $\sum_i m_i =M\geq 0$. The authors obtain a confinement
property in probability measures assuming that the potential is
radial and attractive outside a ball, apart from other technical
assumptions related to the well-posedness theory for probability
measures. We will improve over the main result in \cite{CDFLS2} in
terms of the assumed attractive strength at infinity. More precisely, we
will allow for slower growing at infinity potentials, see Section
2 for the precise hypotheses.

We will later obtain a confinement property for solutions to
\eqref{pde} in a smooth functional setting of compactly supported
initial data in $\mathcal{W}^{2,\infty}(\R^N)$. The trade-off is
to allow more singular repulsive at the origin potentials. In
fact, we will concentrate on the particular case of Newtonian
repulsion plus smooth attractive potential with certain growth at
infinity. In this functional setting, we can deal with smooth
solutions obtained by a slight variation of the arguments in
\cite{BLR,BalagueCarrillo,BLL,BCLR}. Section 3 shows that the same
ideas used for particles and solutions in the functional setting
$\PP (\R^N)$ apply in the continuum model \eqref{pde} for smooth
solutions with this particular potential. The strong Newtonian
repulsion at the origin of the potential allows us to derive a
priori $L^\infty$ bounds that otherwise are not known. Let us
finally mention that confinement for the repulsive Newtonian plus
an attractive harmonic potential was obtained in \cite{BLL}.

In the last section of the paper, Section 4, we perform some numerical
computations for \eqref{odes} with different repulsive-attractive
potentials and we study the confinement of the stationary states for
these cases. The stationary states are found using numerical techniques
 similar to the ones used in \cite{BUKB,soccerball}. We do a careful
study in each case by computing the radius of the support in order
to verify numerically the confinement for the solutions. Our
numerical studies indicate that the assumptions we impose on the
potentials are not sharp and could possibly be improved.

\section{Confinement for probability measures}

In this section, we will work with the theory developed in
\cite{CDFLS} in the framework of optimal mass transportation
theory applied to \eqref{pde}. We remind the reader that the
equation \eqref{pde} can be classically understood as the gradient
flow of the interaction potential energy \cite{CMRV2,AGS,CMRV1}.
Optimal transport techniques allow to construct a well-posedness
theory in the space of probability measures with bounded second
moments $\PP_2 (\R^N)$ at least for smooth potentials \cite{AGS}.
The regularity assumptions on the potential were relaxed in
\cite{CDFLS} allowing for potentials attractive at the origin with
a at most Lipschitz singularity there, i.e., allowing for local
behaviors like $W(x)\simeq |x|^a$, with $1\leq a<2$.

More precisely, we assume that the potential $W(x)$, see
\cite{CDFLS}, satisfies
\begin{indent_paragraph}{1cm}
\textbf{(NL0)} $W\in C(\mathbb{R}^N)\cap C^1(\mathbb{R}^N \backslash \{0\})$, $W(x) = W(-x)$, and $W(0)=0$.\\
\textbf{(NL1)} $W$ is $\lambda$-convex for a certain $\lambda\in\mathbb{R}$, i.e. $W(x)-\frac{\lambda}{2}|x|^2$ is convex.\\
\textbf{(NL2)} There exists a constant $C>0$ such that
$$W(z) \leq C(1+|z|^2), \quad \text{ for all }z\in \mathbb{R^d}.$$
\end{indent_paragraph}
to derive the well-posedness theory of gradient flow solutions to
\eqref{pde} with initial data in $\PP_2 (\R^N)$.

Under this set of assumptions \textbf{(NL0)}--\textbf{(NL2)}, we
can derive from \cite[Theorems 2.12 and 2.13]{CDFLS} that the
mean-field limit associated to the model in \eqref{pde} holds. On
one hand, this means that approximating the initial data by atomic
measures, we can approximate generic solutions of \eqref{pde} by
particular solutions corresponding to initial data composed by
finite number of atoms (particle solutions). On the other hand,
this also implies that the solution of \eqref{pde} given in
\cite[Theorems 2.12 and 2.13]{CDFLS} coincides with the atomic
measure constructed by evolving the locations of the atoms through
the ODE system \eqref{odes}. In other words, if one is interested
in showing a confinement property for \eqref{pde}, it suffices to
prove the confinement property for the particle system solving
\eqref{odes} since the solutions of the particle system
\eqref{odes} approximate accurately in finite time intervals the
solutions of the partial differential equation \eqref{pde}. All
these details are fully explained in \cite[Section 3]{CDFLS2}
allowing us to reduce directly to particle solutions.

To show confinement, we need additional assumptions on $W(x)$ as
in \cite{CDFLS2}. Throughout this paper, we assume that $W$ is
radially symmetric, and attractive outside some ball, i.e.,
\begin{indent_paragraph}{1cm}
\textbf{(NL-RAD)} $W$ is radial, i.e. $W(x)=w(|x|)$, and there exists
$R_a \geq 0$ such that $w'(r)\geq 0$ for
$r\geq R_a$.
\end{indent_paragraph}
It is pointed out in \cite[Remark 1.1]{CDFLS} that \textbf{(NL1)}
guarantees that the repulsive force cannot be too strong, more
precisely,
\begin{equation}\label{def_cw}
C_W := \sup_{x\in B(0, R_a)\backslash\{0\}} |\nabla W(x)|
\end{equation}
is bounded above. Here, we take the convention $C_W=0$ in case
$R_a=0$.

In order to prove confinement results, we need some other
condition to ensure that the attractive strength does not decay
too fast at infinity. In addition to \textbf{(NL0)-(NL3)} and
\textbf{(NL-RAD)}, we assume that $W$ satisfies the following
confinement condition:
\begin{indent_paragraph}{1cm}
\textbf{(NL-CONF)}\vspace{-0.6cm}
$$
 \lim_{r\to\infty} w'(r) r = +\infty,
$$
\end{indent_paragraph}
which is less restrictive than the assumption in \cite{CDFLS2},
namely $\displaystyle\lim_{r\to\infty} w'(r) \sqrt{r} = +\infty$.

Therefore, our goal in this section is to show that if the
particles interact under a potential satisfying assumptions
\textbf{(NL0)-(NL3)}, \textbf{(NL-RAD)}, and \textbf{(NL-CONF)},
have total mass $M=1$, center of mass at $0$, and are initially
confined in $B(0,\bar R_0)$, then they will be confined in some
ball $B(0,\bar R)$ for all times, where $\bar R$ is independent of
the number of particles $n$ but only depending on the kernel $W$ and the initial
support of the cloud of particles $\bar R_0$. Note that the zero
center of mass assumption is possible due to the translational
invariance of \eqref{pde} and \eqref{odes}. Note also that the
solution to the particle system \eqref{odes} in the sense of
\cite[Remark 2.1]{CDFLS2} might lead to a finite number of
collision times, in which the solution may lose its regularity.
Hence when we study the evolution of some quantities in time, we
only take the time derivative in the time intervals in which the
solution is regular.

The strategy to get confinement for particles is as follows: we
need to control quantities that quantifies how much the
distribution spreads in time. In \cite[Proposition 4.2]{CDFLS2}
the argument was based in following the particle furthest away
from the origin and use some energetic arguments to control the
mass of the particles nearby pushing the furthest particle. Here
we follow a different idea. We consider other moments of the
particle system to control the spread of the distribution of
particles in conjunction with the evolution of the furthest
particle from the center of mass. More precisely, we couple the
evolution of the third absolute moment of the particle system with the
evolution of the furthest particle.

\subsection{Evolution of the third moment}

Let $M_3(t)$ denote the third absolute moment of the particle
system \eqref{odes}, namely
\begin{equation*}
M_3(t) := \sum_{i=1}^n m_i |x_i|^3.
\end{equation*}
In this section our main goal is to estimate the time derivative
of $M_3(t)$. As we discussed before, there might be a finite
number of collision times in which $M_3(t)$ becomes
non-differentiable. Nevertheless, since all the particles have
finite velocity, $M_3(t)$ is Lipschitz continuous in time even
during collision. In all the computation below, the time
derivative of $M_3$ is only taken in the time intervals where
$M_3(t)$ is differentiable; and the continuity of $M_3(t)$ ensures
that the fundamental theorem of calculus still holds for $M_3(t)$.

Since $|x|^3$ is a convex function on $\mathbb{R}^N$,
we know that every pair of attracting particles would give a
negative contribution to $dM_3/dt$, whereas every  repulsing pair
gives a positive contribution. We can directly evaluate $dM_3/dt$
as follows:
\begin{equation}\label{dm}
\begin{split}
\frac{dM_3(t)}{dt} =~&3 \sum_i m_i |x_i| \langle\dot x_i , x_i\rangle\\
=~&3 \sum_i \sum_{j\in Z(i)} m_i   m_j \langle -\nabla W(x_i - x_j) , x_i |x_i|\rangle \\
=~& \frac{3}{2}\sum_i\sum_{j\in Z(i)} m_i   m_j \langle -\nabla W(x_i - x_j) , \big(x_i |x_i|-x_j |x_j|\big)\rangle \\
=~&  -\frac{3}{2}\sum_i\sum_{j\in Z(i)} m_i   m_j w'(|x_i - x_j|)
T_{ij},
\end{split}
\end{equation}
with $T_{ij}:= \dfrac{x_i-x_j}{|x_i-x_j|} \cdot \big(x_i |x_i|-
x_j |x_j|\big) $ and where antisymmetry of $\nabla W(x)$ is
used. Elementary manipulations yield that $T_{ij}$ can be
rewritten as
\begin{equation*}
\begin{split}
T_{ij} =(|x_i|+|x_j|) \frac{\frac{1}{2}(|x_i| - |x_j|)^2 +
\frac{1}{2} |x_i - x_j|^2}{|x_i - x_j|},
\end{split}
\end{equation*}
which gives the following upper and lower bound for $T_{ij}$:
\begin{equation}\label{ineq_T}
\frac{1}{2} \big(|x_i| + |x_j| \big) \big|x_i-x_j \big |\leq
T_{ij} \leq \big(|x_i| + |x_j| \big) \big |x_i-x_j \big | .
\end{equation}
Next we will find an upper bound for $-w'(|x_i-x_j|)$ in
\eqref{dm}.  Let us define the nearest particles set $N(i)$ as
the set of indexes of particles that are possibly repelling
the $i$-th particle, more precisely,
$$
N(i) := \Big\{j \in \{1, \ldots, n\}: 0< |x_j(t) - x_i(t)| \leq R_a\Big\}.
$$
Then \eqref{def_cw} implies that $-w'(|x_i-x_j|)\leq C_W$ for all
$j\in N(i)$.

For $j \not\in N(i)$, \textbf{(NL-RAD)} gives that $-w'(|x_i-x_j|)
\leq 0$. A better bound can be obtained using \textbf{(NL-CONF)}:
note that for any fixed constant $K_1>0$ to be specified momentarily,
there exists some $R_{K_1}>2R_a$, such that
\begin{equation*}
-w'(r) < -\frac{K_1}{r}\quad  \text{ for all } r>R_{K_1}.
\end{equation*}
Let us define the set of furthest particles $F(i)$ as the
set of indexes of particles whose distance to the $i$-th particle
are larger than $R_{K_1}$, namely
$$
F(i) := \Big\{j \in \{1, \ldots, n\}: |x_j(t) - x_i(t)| > R_{K_1}\Big\}.
$$
The definitions of $N(i)$ and $F(i)$ are illustrated in Figure \ref{def_ni_fi}.
 Then the upper bound for $-w'(|x_i-x_j|)$ can be summarized as following:
\begin{equation}\label{ineq_w'}
-w'(|x_i-x_j|) \leq \begin{cases}
C_W & \text{for } j \in N(i),\\
-\dfrac{K_1}{|x_i-x_j|}& \text{for }  j \in F(i),\\
0 & \text{for } j \not \in N(i) \cup F(i).
\end{cases}
\end{equation}

\tikzset{
  big arrow/.style={
    decoration={markings,
    mark=at position 0.12cm with {\arrow[scale=2,#1]{<}},
    mark=at position 1 with {\arrow[scale=2,#1]{>}}
    },
    postaction={decorate},
    shorten >=0.4pt},
  big arrow/.default=black}

  \tikzset{
  single big arrow/.style={
    decoration={markings,
    mark=at position 1 with {\arrow[scale=2,#1]{>}}
    },
    postaction={decorate},
    shorten >=0.4pt},
  big arrow/.default=black}

\begin{figure}[htb]
\begin{center}
\begin{tikzpicture}
\fill[blue!15] (9,4) rectangle (-2,-4);
\filldraw[fill=black, draw=black] (4,0) circle (0.06cm);
\filldraw [fill=white, draw=black](4,0) circle (3.7cm);
\filldraw [fill=red!15, draw=black](4,0) circle (1.5cm);
\draw [black, big arrow] (4,0) -- (4-3.7/1.41421,-3.7/1.41421) node[midway,above, xshift=-0.3cm, yshift=-0.2cm] {$R_{K_1}$};
\draw [big arrow] (4,0) -- (4+1.5/1.41421,-1.5/1.41421) node[midway,below,xshift=-0.1cm,yshift=0.15cm] {$R_a$};
\filldraw[fill=black, draw=black] (4,0) circle (0.06cm) node[xshift=0.35cm] {\large{$x_i$}};
\draw (0,2) node[blue!50!black] {\Large{$F(i)$}};
\draw (4,0.7) node[red!50!black] {\Large{$N(i)$}};
 \end{tikzpicture}
\end{center}
\caption{Illustration of the sets $N(i)$ and $F(i)$. For the $i$-th
particle, $N(i)$ is defined as the set of indexes of particles in
the red region, while $F(i)$ is the set of indexes of particles in
the blue region.} \label{def_ni_fi}
\end{figure}
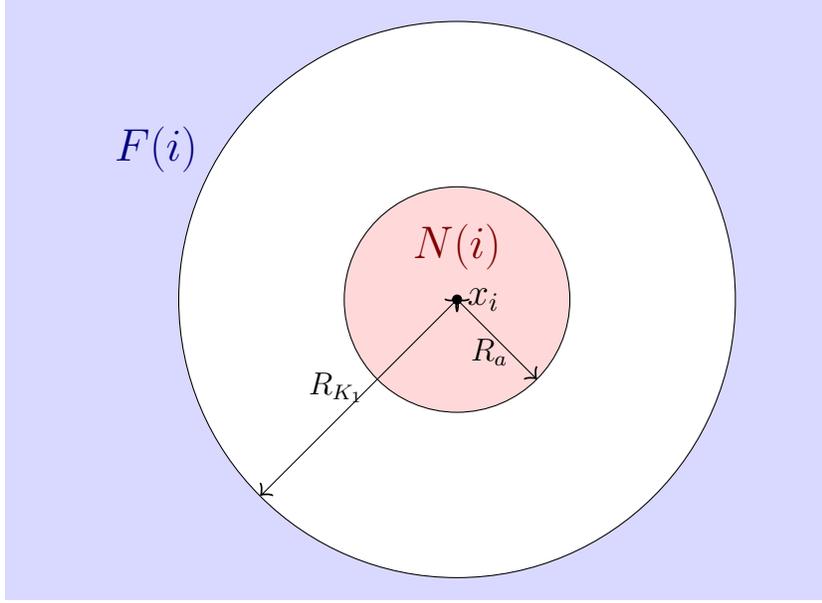

By plugging \eqref{ineq_T} and \eqref{ineq_w'} into \eqref{dm} and
setting $K_1:= 10C_W R_a$, we obtain
\begin{align}
\frac{dM_3(t)}{dt}
\leq ~&  \frac{3}{2}\sum_i m_i \Big( \sum_{j\in N(i)}  m_j C_W R_a  \big(|x_i| + |x_j| \big) - \sum_{j\in F(i)} m_j K_1 \frac{1}{2}(|x_i|+|x_j|) \Big) \nonumber\\
\leq ~& \frac{3}{2} C_W R_a \sum_i m_i
\big(T_r^{i}-5T_a^{i}\big),\label{dm_2}
\end{align}
with
$$
T_r^{i}:=\sum_{j\in N(i)}  m_j   \big(|x_i| + |x_j| \big) \qquad
\mbox{ and } \qquad T_a^{i}:=\sum_{j\in F(i)} m_j
(|x_i|+|x_j|)\,.
$$

Now we claim that
\begin{equation}\label{claim_t}
T_r^{i} \leq 4 T_a^{i} \quad \text{ if } |x_i| > R_{K_1}.
\end{equation}
Its validity is one of the main reasons for imposing the requirement
\textbf{(NL-CONF)}. To prove the claim, recall that
we assume the center of mass is at 0 at $t=0$ without loss of generality. Due
to the conservation of the center of mass, for any time $t$, we have $x_j(t)$ satisfies
$\sum_j m_j x_j(t) = 0$. Let $e_i \in \mathbb{R}^N$ denote the unit vector pointing
in the direction of $x_i$, then it follows immediately that
$$
\sum_{j=1}^n m_j x_j \cdot e_i = 0.
$$
For $|x_i| > R_{K_1}$, we split the above sum into three parts, and get
\begin{align}
\sum_{j \in N(i)} m_j x_j \cdot e_i &= -\sum_{j \in F(i)} m_j x_j \cdot e_i - \sum_{j\not \in N(i) \cup F(i)} m_j x_j \cdot e_i \nonumber \\
&\leq -\sum_{j \in F(i)} m_j x_j \cdot e_i \leq T_a^{i},
\label{center_mass_ineq}
\end{align}
where the first inequality is due to the fact that $x_j\cdot e_i > 0$ for all $j \not\in F(i)$.

Moreover, recall that we find $R_{K_1}$, we force it to be bigger
than $2R_a$. This is to guarantee that for all $|x_i| > R_{K_1} >
2R_a$ and $j\in N(i)$, the angle between the vectors $x_i$ and
$x_j$ is less than $\frac{\pi}{6}$. As a result, we have $x_j
\cdot e_i \geq \frac{\sqrt{3}}{2} |x_j|$. Noticing that for
$|x_i|> R_{K_1}$ and $j\in N(i)$ we also have $|x_j|>|x_i|/2$,
which is equivalent with $|x_j| > \frac{1}{3}(|x_i|+|x_j|)$. Thus
finally we have
$$
\sum_{j \in N(i)} m_j x_j \cdot e_i \geq \sum_{j \in N(i)}  \frac{m_j}{2\sqrt{3}} (|x_i|+|x_j|) = \frac{T_r^i}{2\sqrt{3} } \quad\text{ for } |x_i|>R_{K_1},
$$
and by combining it with \eqref{center_mass_ineq} we obtain the
claim \eqref{claim_t}.

Due to \eqref{claim_t}, we deduce that for any $|x_i| > R_{K_1}$, $T_r^{i}
- 5 T_a^{i} \leq -T_a^{i}$, hence \eqref{dm_2} becomes
\begin{equation*}
\begin{split}
\frac{dM_3(t)}{dt} \leq \frac{3}{2} C_W R_a \Big( T_1 -\sum_{|x_i|
> R_{K_1}} \sum_{j\in F(i)} m_i m_j (|x_i| + |x_j|)\Big)\,,
\end{split}
\end{equation*}
with
$$
T_1:=\sum_{|x_i| \leq R_{K_1}} \sum_{j\in N(i)} m_i m_j (|x_i| +
|x_j|)\,.
$$
Note that we can easily bound $T_1$ by a constant only depending
on $W$ (since $|x_i| \leq R_{K_1}$ and $|x_j| \leq R_{K_1} + R_a$), thus we can rewrite the above inequality as inequality as
\begin{equation}\label{dm_3}
\begin{split}
\frac{dM_3(t)}{dt} \leq C_1 - C_2\sum_{|x_i| > R_{K_1}} \Big( m_i |x_i|
\sum_{j\in F(i)} m_j \Big),
\end{split}
\end{equation}
where $C_1, C_2$ only depends on $W$. At this point we will take a
pause on the evolution of $M_3$; we will revisit the inequality
\eqref{dm_3} soon in Section \ref{couple_discrete} to couple it
with the evolution of the support.

\subsection{Coupling with the evolution of the support}
\label{couple_discrete}
For all $t\geq 0$, let  $R(t)$ denote the distance of the furthest
particle from the center of mass (which we assumed to be 0 without
loss of generality), namely
\begin{equation}\label{def:R}
R(t) := \max_{i=1,\ldots,n}|x_i(t)|.
\end{equation}
It is pointed out in \cite[Proposition 4.2]{CDFLS} that $R(t)$ is
Lipschitz in time. Our goal is  to prove that
$\displaystyle\limsup_{t\to \infty} R(t) < \bar R$, where $\bar R$
only depends on $W$.

We begin by reminding a claim proved in \cite[Proposition
2.2]{CDFLS2}: Let $e$ be any unit vector. Then
\begin{equation}\label{ineq:one_third_mass}
\sum_{x_j \cdot e \leq R(t)/2} m_j \geq \frac{1}{3}\sum_{j} m_j,
\end{equation}
i.e. the green region in Figure \ref{one_third_mass} contains at
least $1/3$ of the total mass.

\begin{figure}
\begin{center}
\begin{tikzpicture}
\def\radius{3.7cm}
\filldraw [fill=white, draw=black](0,0) circle (\radius);
\begin{scope}[fill=green!15]
\clip (0,0) circle(\radius);
\fill(-\radius,-\radius) rectangle (\radius/2,\radius);
\end{scope}
\draw [black, big arrow] (0,0) -- (\radius/2, 1.732*\radius/2) node[midway,above, xshift=-0.3cm, yshift=-0.2cm] {$R(t)$};
\draw [big arrow] (0,0) -- (\radius/2, 0) node[midway,above,xshift=-0.1cm,yshift=-0.15cm] {$R(t)/2$};
\draw [thick, dashed] (\radius/2,\radius) -- (\radius/2,-\radius) ;
\filldraw[fill=black, draw=black] (0,0) circle (0.06cm) node[xshift=-0.3cm,yshift=-0.2cm] {$0$};

\draw [thick, single big arrow] (0,-1) -- (\radius/3, -1) node[midway,below,xshift=-0.1cm,yshift=-0.15cm] {unit vector $e$};

 \end{tikzpicture}
\end{center}
\caption{For any unit vector $e$, the green region above contains
at least one third of the total mass. Here $R(t)$ is as defined in
\eqref{def:R}.} \label{one_third_mass}
\end{figure}
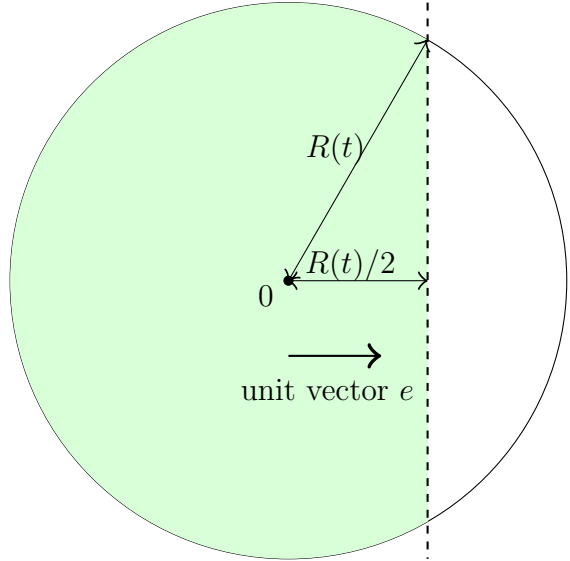

It is argued in the proof of \cite[Proposition 4.2]{CDFLS} that
for all time $t>0$, there is a particle index $i_0(t)$ (here $i_0$
may depend on $t$), such that
\begin{equation}\label{jjj}
|x_{i_0}(t)| = R(t) \qquad \text{ and } \qquad \frac{d^+}{dt} R(t)
= \dot x_{i_0}(t)\cdot \frac{x_{i_0}(t)}{R(t)},
\end{equation}
where $\frac{d^+}{dt}$ stands for the right derivative. This
technical point is due again to the lack of regularity of $R(t)$
for all $t$, see \cite[Proposition 4.2]{CDFLS}. From now on, the
index $i_0$ refers to any index satisfying the previous
properties.

To control $\frac{d^+}{dt} R(t)$, it suffices to look at the outward
velocity of the $i_0$-th particle at this time. As argued in
\cite[Proposition 2.2]{CDFLS2} and illustrated in Figure \ref{in_n_out},
the red region is possibly pushing it out, but all the green
region is pulling it towards the origin. We proceed by
estimating the compensation between these two competing effects. Let
$G(i_0)$ denote the set of indexes of particles in the green region in
Figure \ref{in_n_out}, namely
$$
G(i_0) := \Big\{j \in \{1, \ldots, n\}: x_j(t) \cdot
\frac{x_{i_0}}{|x_{i_0}|} \leq \frac{R(t)}{2}\Big\}\,.
$$
Recall that throughout this section, we assume the total mass is 1
without loss of generality. It then follows from
\eqref{ineq:one_third_mass} that
\begin{equation}\label{massgi}
\sum_{j\in G(i_0)} m_j \geq1/3\,.
\end{equation}

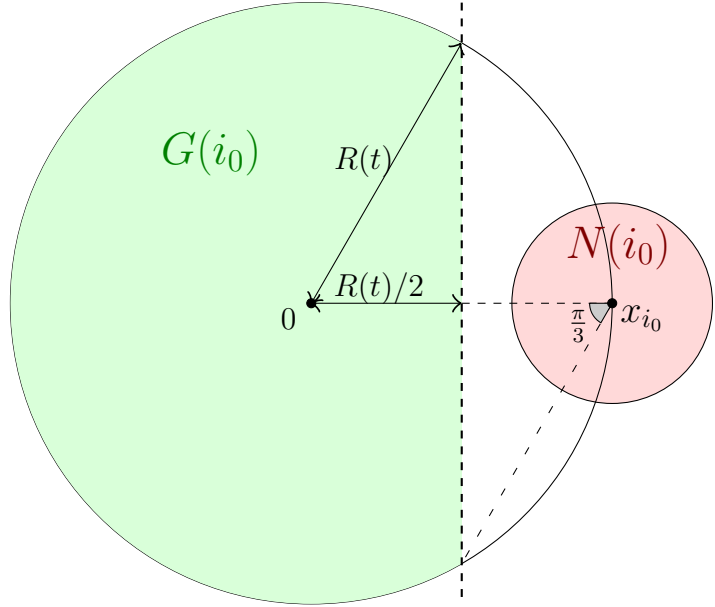
\begin{figure}[htb]
\begin{center}
\begin{tikzpicture}
\def\radius{4cm}
\filldraw [fill = red!15, draw=black](\radius,0) circle (\radius/3);
\draw [black](0,0) circle (\radius);
\begin{scope}[fill=green!15]
\clip (0,0) circle(\radius);
\fill(-\radius,-\radius) rectangle (\radius/2,\radius);
\end{scope}
\draw [black, big arrow] (0,0) -- (\radius/2, 1.732*\radius/2) node[midway,above, xshift=-0.3cm, yshift=-0.2cm] {$R(t)$};
\draw [big arrow] (0,0) -- (\radius/2, 0) node[midway,above,xshift=-0.1cm,yshift=-0.15cm] {$R(t)/2$};
\draw [thick, dashed] (\radius/2,\radius) -- (\radius/2,-\radius) ;
\draw [black, dashed] (\radius/2,0) -- (\radius,0);
\draw [black, loosely dashed] (\radius, 0)-- (\radius/2, -1.732*\radius/2) ;
\filldraw[fill=gray!40] (\radius-3mm,0) arc (180:240:3mm) -- (\radius, 0) -- (\radius-3mm, 0);
\draw (\radius, 0) node[xshift=-0.45cm, yshift=-0.3cm] {$\frac{\pi}{3}$};
\draw[green!50!black] (-\radius/3, \radius/2) node {\Large{$G(i_0)$}};
\draw[red!50!black] (\radius*1.02, \radius*0.19) node {\Large{$N(i_0)$}};

\filldraw[fill=black, draw=black] (0,0) circle (0.06cm) node[xshift=-0.3cm,yshift=-0.2cm] {$0$};
\filldraw[fill=black, draw=black] (\radius,0) circle (0.06cm) node[xshift=0.4cm,yshift=-0.2cm] {\large{$x_{i_0}$}};

 \end{tikzpicture}
\end{center}
\caption{When $R(t) > 2R_a$, the particles in the green region are all pulling the
particle $x_{i_0}$ towards the origin, while the particles in the
red region are possibly pushing it out.} \label{in_n_out}
\end{figure}
Using \eqref{jjj}, \eqref{odes}, and some simple manipulations,
the growth of $R(t)$ is controlled by the following inequality:
\begin{align}
\frac{d^+}{dt} R(t) \leq~ & C_W \sum_{j\in N(i_0)} m_j - \sum_{j \in G(i_0)} m_j w'(|x_j - x_{i_0}|)  \cos(\theta(-x_{i_0}, x_j - x_{i_0})),\nonumber\\
\leq~& C_W \sum_{j\in N(i_0)} m_j - \frac{1}{2}\sum_{j \in G(i_0)}
m_j w'(|x_j - x_{i_0}|), \label{dr}
\end{align}
where $\theta(-x_{i_0}, x_j - x_{i_0})$ denotes the angle between the two vectors
$-x_{i_0}$ and $x_j - x_{i_0}$, which is less than $\frac{\pi}{3}$
as shown in Figure \ref{in_n_out}. Due to \textbf{(NL-CONF)}, for any
large constant $K_2$, which we will fix later, there exists some
radius $R_{K_2}$, such that $w'(r) >K_2/r$ for all $r>R_{K_2}$.
Hence for all $t$ satisfying $R(t) > R_{K_2}$, \eqref{dr} becomes
\begin{equation}\label{dr_2}
\begin{split}
\frac{d^+}{dt} R(t) \leq~& C_W \sum_{j\in N(i_0)} m_j - \frac{1}{2} \frac{{K_2}}{\displaystyle\sup_{j\in G(i_0)} |x_j - x_{i_0}|} \sum_{j \in G(i_0)} m_j \\
\leq~& C_W \sum_{j\in N(i_0)} m_j - \frac{K_2}{12 R(t)}\,,
\end{split}
\end{equation}
where \eqref{massgi} was used to obtain the last inequality.

To ensure the coupling between the growth of $M_3(t)$ and the
growth of $R(t)$ go smoothly, let us go back to \eqref{dm_3} and
perform some elementary manipulation on it. When $R(t) >
R_{K_1}+R_a$, for any $j \in N(i_0)$, we have $|x_j|>R_{K_1}$,
hence
\begin{equation}\label{dm_4}
\begin{split}
\frac{dM_3(t)}{dt} \leq~& C_1 - C_2\sum_{i \in N(i_0)} \Big( m_i |x_i|
\sum_{j\in F(i)} m_j \Big)\,.
\end{split}
\end{equation}
And if in addition we have $R(t) > 2(R_{K_1}+R_a)$, then it follows that $G(i_0) \subset
F(i)$ for any $i \in N(i_0)$, hence we can replace the $F(i)$ in
\eqref{dm_4} by $G(i_0)$ and obtain
\begin{align}
\frac{dM_3(t)}{dt} \leq~& C_1 - C_2\sum_{i \in N(i_0)} \Big(m_i |x_i| \sum_{j\in G(i_0)} m_j\Big) \leq  C_1 - \frac{1}{3}C_2 (R(t)-R_a) \sum_{i \in N(i_0)} m_i \nonumber \\
=~& \left(C_1 + \frac{1}{3} C_2 R_a \right)  - \frac{1}{3}C_2
R(t) \sum_{i \in N(i_0)} m_i,  \label{dm_final}
\end{align}
where we used \eqref{massgi} again to obtain the second inequality.  Finally, we set $R_1 :=
\max\{R_{K_2}, 2(R_{K_1} + R_a)\}$, to ensure that both
\eqref{dr_2} and \eqref{dm_final} hold for $R(t)>R_1$.

Finally we are ready to couple $M_3(t)$ with $R(t)$. By putting the
estimates on $\frac{d^+}{dt}R(t)$ and $\frac{d}{dt}M(t)$ together,
we will show that if $R(t)$ grows from $R_1$ to some very large
number in some time interval $[t_1, t_2]$, then the integral of
$dM_3/dt$ is negative over this time interval, i.e.,
$M_3(t_2)<M_3(t_1)$. On the other hand, we will directly prove
that $M_3(t_2)$ must be bigger than $M_3(t_1)$, which causes a
contradiction.

Let $A_1$ be a sufficiently large constant which we will determine
later. If the particles start in $B(0,\bar R_0)$ and eventually
touch the boundary of $B(0, A_1R_1)$, then there exist
$0<t_1<t_2$, such that
$$
R(t_1) = R_1, ~~ R(t_2) = A_1 R_1, ~~\frac{d^+}{dt} R(t_2)\geq 0,
$$
and
$$
R(t)\in [R_1, A_1R_1]~~ \text{ for all } t_1\leq t \leq t_2.
$$
More precisely,  by letting $t_2 := \min \{ t\geq 0: R(t)=A_1
R_1\}>0$, and $t_1 := \max\{ 0\leq t\leq t_2: R(t) =
R_1\}$, they would satisfy all the requirements.

Since $R(t_2)>R(t_1)$, we have
\begin{equation*}
\int_{t_1}^{t_2} \Big(\frac{d^+}{dt}R(t)\Big) R(t)\, dt =
\frac{R^2(t)}{2}\Big|_{t_1}^{t_2}>0.
\end{equation*}
Using \eqref{dr_2}, the above inequality implies
\begin{equation*}
\int_{t_1}^{t_2} C_W \left( \sum_{j \in N(i_0(t))} m_j\right) R(t)
\,dt
> \int_{t_1}^{t_2} \frac{1}{12}K_2 \, dt,
\end{equation*}
and by plugging it into the integral version of \eqref{dm_final} we obtain
\begin{equation*}
\int_{t_1}^{t_2} \frac{dM_3}{dt}(t) \, dt \leq   \int_{t_1}^{t_2}
\left[(C_1 + \frac{1}{3} C_2 R_a)  - \frac{C_2}{36C_W}
K_2\right]\,dt,
\end{equation*}
hence by choosing $K_2:=1+ 36C_W (C_1 + \frac{1}{3}C_2 R_a)/C_2$,
which only depends on $W$, we have $M_3(t_2) < M_3(t_1)$.

On the other hand, if $R(t)$ successfully grows from $R_1$ to
$A_1 R_1$, we will show that $M_3$ indeed has to increase, namely
$M_3(t_2)>2M_3(t_1)$ for $A_1$ sufficiently large.  First, we can
bound $M_3(t_1)$ above by the very rough bound
$R(t_1)^3=R_1^3$. At time $t_2$, recall that
$\frac{d^+}{dt}R(t_2) \geq 0$, hence \eqref{dr_2} implies that
$$
\sum_{j\in N(i_0(t_2))} m_j \geq \frac{K_2}{12 C_W A_1 R_1}.
$$

Finally,  by noticing that
$$
M_3(t_2) \geq (R(t_2)-R_a)^3 \sum_{j\in N(i_0(t_2))} m_j\geq
\frac{K_2}{12 C_W A_1 R_1} (A_1 R_1-R_a)^3 \,,
$$
we obtain $M_3(t_2) \gtrsim (A_1R_1)^2$. Therefore we can choose
$A_1$ sufficiently large such that $M_3(t_2)
> 2M_3(t_1)$, which leads to a contradiction with
$M_3(t_2)<M_3(t_1)$.

Note that the proof above shows that $R(t)$ can never reaches
$\bar R := A_1 R_1$, which is a large constant only depend on
$W$, and in particular is independent of the number of particles.


\section{Confinement for kernel with Newtonian repulsion}
In this section, we consider the interaction kernel $W(x)$ given
by
\begin{equation}
W(x) = -\mathcal{N}(x) + W_a(x) \label{W_newton_rep}
\end{equation}
with $N\geq 2$. Here $\mathcal{N}(x)$ is the Newtonian kernel,
namely
$$
\mathcal{N}(x) =
\begin{cases}
\frac{1}{2\pi}\ln|x| & N=2,\\[0.1cm]
-\dfrac{c_N}{|x|^{N-2}} & N\geq 3,
\end{cases}
$$
where $c_N$ denotes the volume of a unit ball in $\mathbb{R}^N$. Throughout this section we assume that $W_a(x)$ satisfies the following assumptions:
\begin{indent_paragraph}{1cm}
\textbf{(W1)} $\Delta W_a \in L^1_{loc}(\mathbb{R}^N)$.\\[0.1cm]
\textbf{(W2)} $\Delta W_a$ is bounded in $\mathbb{R}^N \setminus B_\epsilon(0)$ for any $\epsilon>0$.\\[0.1cm]
\textbf{(W-RAD)} $W_a(x) = w(|x|)$ with $w\in C^1((0,\infty))$ and $w'(r)\geq 0$ for $r> 0$.\\[0.1cm]
\textbf{(W-CONF)}$\displaystyle\lim_{r\to\infty} w'(r) r^{1/N} = +\infty$. \\
\end{indent_paragraph}

Our goal in this section is to show that under the above
assumptions, if a solution has total mass $M=1$, center of mass at
$0$, and are initially confined in $B(0,\bar R_0)$, then it will
be confined in some fixed ball centered at 0 for all times, where
the radius of the ball only depends on $W_a$, $\bar R_0$, the
dimension $N$, and the $L^\infty$ norm of the initial data.

\begin{remark}
For $N=1$, the Newtonian kernel becomes $|x|$. Note that
in this case the confinement result does not hold under the
assumptions above, since the repulsive velocity field between two
particles will be a constant regardless of the distance between
them, while the attraction may vanish as the distance goes to
infinity. We can compensate this difficulty by imposing stronger
assumption on the attractiveness of $W_a$ at infinity. More
precisely, by replacing \emph{\textbf{(W-CONF)}} by
$\displaystyle\lim_{r\to\infty} (w'(r) - 1)r = +\infty$,  the
confinement result will hold with a similar proof as in Section
{\rm 2} carried over at the continuum level.\end{remark}

\begin{remark}
{\rm\textbf{(W-CONF)}} is more restrictive than
{\rm\textbf{(NL-CONF)}}, especially for large $N$. In the proof
below, one can see that $dM_3/dt$ does not cause a problem at all,
indeed it satisfies the same inequality as in the non-singular
kernel case in Section {\rm 2}. The problem lies in $dR/dt$: due
to the singular repulsive kernel, we got a worse control of
$dR/dt$ than before, see \eqref{ineq:drdt_newton_1}.
\end{remark}

We point out that slight variations of the arguments in
\cite[Section 5]{BCLR} and \cite{BalagueCarrillo,BLL} give a
well-posedness theory for smooth solutions constructed by
characteristics. More precisely, for any compactly supported
initial data $\rho_0\in \mathcal{W}^{2,\infty}(\real^N)$, there
exists a unique classical solution $\rho\in C^1([0,T]\times
\real^N)\cap
\mathcal{W}^{1,\infty}_{loc}(\real_+,\mathcal{W}^{1,\infty}(\real^N))$
to \eqref{pde} with $W$ satisfying \textbf{(W1)}-\textbf{(W2)}.
Moreover, the associated velocity field $v(t,x)=-\nabla W\ast
\rho$ is Lipschitz continuous in both space and time, hence the
characteristics are well defined:
\[
   \frac{d}{dt}X_t = -(\nabla W\ast \rho)(t,X_t),
\]
and the solution $\rho$ is given by
\[
   \rho(t,x) = \rho_0(X_{t}^{-1})\det (DX_t^{-1}).
\]
Since the initial data is compactly supported, it remains
compactly supported for all time (although the support may grow in time), and its support is obtained through
the $C^1$-characteristic maps $X_t$.

First we remind a lemma showing $L^\infty$-bounds of the solution.
This is a classical argument that can be seen for instance in
\cite{FellnerRaoul2,BLL} and \cite[Section 5]{BCLR} but we give a
short proof for completeness.

\begin{lemma}\label{lemma:u_infty}
Let $W$ be given by \eqref{W_newton_rep}, with $W_a$ satisfying
{\rm\textbf{(W1)}-\textbf{(W2)}}. Let $\rho$ be a classical solution
to \eqref{pde} with compactly supported initial data $\rho_0\in
\mathcal{W}^{2,\infty}(\real^N)$. Then $\|\rho(t,\cdot)\|_\infty
\leq M_0$ for all $t\geq 0$, where $M_0$ only depends on $W_a$ and
$\rho_0$.
\end{lemma}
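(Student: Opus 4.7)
The plan is to propagate an $L^\infty$ bound along the characteristic flow $X_t$, exploiting the fact that the Newtonian part of $W$ contributes a Riccati-type damping. Since $\rho$ is a classical solution of $\rho_t + \nabla\cdot(\rho v) = 0$ with $v = -\nabla W * \rho$, differentiating $\rho$ along a characteristic yields
\begin{equation*}
\frac{d}{dt}\rho(t,X_t) \;=\; -\rho(t,X_t)\,(\nabla\cdot v)(t,X_t) \;=\; \rho(t,X_t)\,(\Delta W *\rho)(t,X_t).
\end{equation*}
Splitting $W = -\mathcal N + W_a$ and using that $\Delta(-\mathcal N) = -\kappa\,\delta_0$ for a dimension-dependent constant $\kappa>0$ (fixed by the paper's normalization of $\mathcal N$), this becomes
\begin{equation*}
\frac{d}{dt}\rho(t,X_t) \;=\; -\kappa\,\rho(t,X_t)^2 \;+\; \rho(t,X_t)\,(\Delta W_a *\rho)(t,X_t),
\end{equation*}
so the Newtonian repulsion produces a pointwise quadratic damping along every characteristic.

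Next I would bound the regular piece $\Delta W_a * \rho$ uniformly in terms of $\|\rho\|_\infty$ and a fixed constant. For any $\epsilon>0$, splitting the convolution over $B_\epsilon(0)$ and its complement and using conservation of total mass $M=\int\rho_0$ gives
\begin{equation*}
\bigl|(\Delta W_a *\rho)(t,x)\bigr| \;\leq\; \|\Delta W_a\|_{L^1(B_\epsilon)}\,\|\rho(t,\cdot)\|_\infty \;+\; \|\Delta W_a\|_{L^\infty(\R^N\setminus B_\epsilon)}\,M.
\end{equation*}
By \textbf{(W1)}, $\|\Delta W_a\|_{L^1(B_\epsilon)}\to 0$ as $\epsilon\to 0$, so I fix once and for all $\epsilon=\epsilon(W_a)>0$ with $\|\Delta W_a\|_{L^1(B_\epsilon)}\leq \kappa/2$; by \textbf{(W2)}, $C_\epsilon := \|\Delta W_a\|_{L^\infty(\R^N\setminus B_\epsilon)}$ is then finite. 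Writing $u(t):=\|\rho(t,\cdot)\|_\infty$, these inequalities combine to
\begin{equation*}
\frac{d}{dt}\rho(t,X_t) \;\leq\; \rho(t,X_t)\Bigl[-\kappa\,\rho(t,X_t) + \tfrac{\kappa}{2}\,u(t) + C_\epsilon M\Bigr]
\end{equation*}
along every characteristic.

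Finally I would upgrade this to a differential inequality for $u(t)$ itself. Because $\alpha\mapsto X_t(\alpha)$ is a $C^1$-diffeomorphism of $\R^N$, for each $t$ there is an $\alpha^*(t)$ with $\rho(t,X_t(\alpha^*(t))) = u(t)$; evaluating the previous inequality there, the two $\rho$-terms collapse to $-\tfrac{\kappa}{2}u(t)^2$ and a standard envelope/Danskin argument (valid since $u$ is Lipschitz continuous) yields
\begin{equation*}
\frac{d^+}{dt}u(t) \;\leq\; u(t)\Bigl[-\tfrac{\kappa}{2}\,u(t) + C_\epsilon M\Bigr].
\end{equation*}
The right-hand side is non-positive as soon as $u(t)\geq 2C_\epsilon M/\kappa$, so a comparison argument gives
\begin{equation*}
\|\rho(t,\cdot)\|_\infty \;\leq\; M_0 \;:=\; \max\!\left\{\|\rho_0\|_\infty,\; \tfrac{2 C_\epsilon M}{\kappa}\right\} \qquad \text{for all } t\geq 0,
\end{equation*}
which depends only on $W_a$ and $\rho_0$. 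The main technical point is the last passage from the along-characteristic bound to a bound on $u(t)$, which must accommodate the fact that $u$ is only Lipschitz; one can avoid the Danskin step altogether by running a direct bootstrap/contradiction argument on $u(t)$ using the quadratic damping, which is the standard approach in the references cited.
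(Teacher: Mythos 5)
Your proposal is correct and is essentially the paper's own argument: both evaluate the equation at the spatial maximum of $\rho(t,\cdot)$ (where the transport term drops out, so the along-characteristics and pointwise computations coincide), use $\Delta(-\mathcal N)\ast\rho=-\kappa\rho$ to produce the quadratic damping $-\kappa\rho^2$, and split $\Delta W_a\ast\rho$ into a small-ball piece controlled by \textbf{(W1)} (with $L^1$-norm at most $\kappa/2$) and a far piece controlled by \textbf{(W2)} times the conserved mass, yielding the same $M_0=\max\{\|\rho_0\|_\infty,\,2C_\epsilon M/\kappa\}$. The only difference is presentational — the paper closes with a contradiction at the first time the (increasing) maximum exceeds $M_0$, while you close with a comparison/Danskin argument on the Lipschitz function $u(t)$ — and your explicit tracking of the normalization constant $\kappa$ is, if anything, slightly more careful than the paper's.
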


\begin{proof}
Due to the assumption \textbf{(W1)}, we can find $r_0 > 0$
sufficiently small, such that
$$
\int_{B(0, r_0)} |\Delta W_a(x)| dx \leq \frac{1}{2}.
$$
Then it follows from \textbf{(W2)} that $M_W :=
\displaystyle\sup_{x\in \mathbb{R}^N \setminus B(0, r_0)} |\Delta
W_a(x)|$ is finite. We define $M_0$ as
$$
M_0:= \max\{2M_W  \|\rho_0\|_1  , \|\rho_0\|_\infty\}.
$$

Let us denote by $\tilde M(t)=\max_{x\in\mathbb{R}^N} \rho(t,x)$.
If the
desired result does not hold, then there exists some $t_1>0$, such
that $\tilde M(t_1)> M_0$ and $\tilde M(t)$ is increasing at $t=t_1$. This enables
us to find some $x_1 \in \mathbb{R}^N$, such that
$\tilde M(t_1)=\rho(t_1,x_1)$, and $\frac{\partial}{\partial
t}\rho(t_1,x_1) \geq 0$. On the other hand, since $\rho$ is a
classical solution, we have
\begin{equation}
\begin{split}
\frac{\partial}{\partial t}\rho(t_1,x_1) &= \nabla \rho \cdot (\nabla W*\rho) + \rho (\Delta W*\rho)\\
&= \rho(t_1,x_1) \Big((\rho*\Delta W_a)(t_1,x_1) - \rho(t_1,x_1)
\Big).
\end{split}
\label{eq:u_t}
\end{equation}
Now let us split the integral $\rho*\Delta W_a$ in $B(0, r_0)$ and
outside to get
\begin{equation*}
\begin{split}
(\rho*\Delta W_a)(t_1,x_1) &\leq
\frac{1}{2}\|\rho(t_1,\cdot)\|_\infty + M_W\|\rho(\cdot, 0)\|_1 =
\frac{1}{2} \rho(t_1,x_1) + M_W \|\rho_0\|_1 .
\end{split}
\end{equation*}
Plugging it into \eqref{eq:u_t}, we have
\begin{equation*}
\begin{split}
\frac{\partial}{\partial t}\rho(t_1,x_1)  &\leq \rho(t_1,x_1)
\Big( M_W \|\rho_0\|_1  - \frac{1}{2} \rho(t_1,x_1) \Big) \leq \rho(t_1,x_1)
\Big(\frac{M_0}{2} - \frac{\rho(t_1,x_1)}{2} \Big)< 0,
\end{split}
\end{equation*}
which contradicts with the assumption that
$\frac{\partial}{\partial t} \rho(t_1,x_1) \geq 0$.
\end{proof}

Next we present a technical lemma which will be used in the proof of confinement.

\begin{lemma}
\label{lemma:integral} Assume $u \in L^\infty(\mathbb{R}^N)$ with
$0\leq p < N$. Then it follows that
\begin{equation}
\label{ineq:rearrangement} \int_{B(x_0, R)} \frac{u(y)}{|x_0-y|^p}
dy \leq C(N,p) \|u\|_\infty^{p/N} \left(\int_{B(x_0, R)} u(y) dy
\right)^{(N-p)/N}
\end{equation}
for all $x_0\in\mathbb{R}^N$ and all $R > 0$.
\end{lemma}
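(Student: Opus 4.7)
The plan is a classical splitting argument of Riesz-potential type that balances the $L^\infty$ control of $u$ near the singularity against the $L^1$ control in the bulk. Since the left-hand side only grows when $u$ is replaced by $|u|$, I may assume $u \geq 0$; and the inequality is trivial when $\|u\|_\infty = 0$ or $\int_{B(x_0,R)} u\,dy = 0$, so I assume both are strictly positive.

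Introduce a parameter $r > 0$ to be optimized and split
\begin{equation*}
\int_{B(x_0,R)} \frac{u(y)}{|x_0-y|^p}\,dy = \int_{B(x_0,R) \cap B(x_0,r)} \frac{u(y)}{|x_0-y|^p}\,dy + \int_{B(x_0,R) \setminus B(x_0,r)} \frac{u(y)}{|x_0-y|^p}\,dy.
\end{equation*}
On the inner piece, I bound $u(y) \leq \|u\|_\infty$ and compute in polar coordinates, using $p < N$, that $\int_{B(x_0,r)} |x_0-y|^{-p}\,dy = \frac{N\omega_N}{N-p}\, r^{N-p}$, where $\omega_N$ is the volume of the unit ball. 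On the outer piece, $|x_0-y|^{-p} \leq r^{-p}$, giving a bound of $r^{-p} \int_{B(x_0,R)} u\,dy$. Together this yields
\begin{equation*}
\int_{B(x_0,R)} \frac{u(y)}{|x_0-y|^p}\,dy \;\leq\; \frac{N\omega_N}{N-p}\, r^{N-p}\, \|u\|_\infty \;+\; r^{-p} \int_{B(x_0,R)} u(y)\,dy.
\end{equation*}

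The last step is to optimize in $r$. Elementary calculus produces an optimal $r_\star$ satisfying $r_\star^N = c(N,p)\,\|u\|_{L^1(B(x_0,R))}/\|u\|_\infty$, at which both terms on the right scale like $\|u\|_\infty^{p/N}\bigl(\int_{B(x_0,R)} u\,dy\bigr)^{(N-p)/N}$, which is the desired bound. The only point requiring a little care is the case $r_\star > R$: then the outer piece is empty, but the condition $r_\star > R$ is precisely $R^N\|u\|_\infty \lesssim \|u\|_{L^1(B(x_0,R))}$, from which one checks directly that $\frac{N\omega_N}{N-p}R^{N-p}\|u\|_\infty \leq C(N,p)\|u\|_\infty^{p/N}\|u\|_{L^1(B(x_0,R))}^{(N-p)/N}$. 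I do not expect any serious obstacle; this is a standard $L^\infty$--$L^1$ interpolation estimate, and the only subtlety is the trivial case distinction on whether the optimal $r_\star$ lies in $(0,R]$.
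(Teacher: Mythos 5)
Your proposal is correct, and it takes a genuinely different route from the paper. The paper first reduces to showing $\int_{\mathbb{R}^N} v(y)|y|^{-p}\,dy \leq C(N,p)\|v\|_\infty^{p/N}\|v\|_1^{(N-p)/N}$ for $v\in L^1\cap L^\infty$, and then proves this by a rearrangement (bathtub-principle) argument: it replaces $v$ by the indicator profile $w=\|v\|_\infty\chi_{B(0,r_0)}$ with the same $L^1$ mass, uses the layer-cake representation and Fubini to show that integration against the radially decreasing weight $|y|^{-p}$ can only increase under this replacement, and then evaluates the integral for $w$ explicitly. Your argument instead splits the domain at a radius $r$, bounds the near part by $\|u\|_\infty\cdot\frac{N\omega_N}{N-p}r^{N-p}$ and the far part by $r^{-p}\int u$, and optimizes in $r$; your choice $r_\star^N\sim \|u\|_{L^1(B(x_0,R))}/\|u\|_\infty$ is in fact (up to the constant) the same radius $r_0$ that the paper's rearrangement singles out, so the two proofs land on the same extremal configuration. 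Your version is the more standard Riesz-potential interpolation and generalizes immediately to other decreasing kernels bounded near/far; the paper's version is slightly slicker in that it avoids the optimization and the case distinction $r_\star>R$ entirely (the rearranged profile automatically handles both regimes). Your handling of the boundary case $r_\star>R$ is correct; note also that you could avoid it altogether, since for nonnegative $u$ the two-term bound remains a valid upper estimate for every $r>0$, including $r>R$. One cosmetic point: the reduction to $u\geq 0$ via $|u|$ changes the right-hand side as well as the left, so it is cleaner to simply observe (as the paper implicitly does, since the lemma is applied to the density $\rho$) that the statement is only meaningful for nonnegative $u$.
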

\begin{proof}
First note that it suffices to prove the following
inequality holds for all $v \in L^1(\mathbb{R}^N) \cap
L^\infty(\mathbb{R}^N)$:
\begin{equation*}
\int_{\mathbb{R}^N} \frac{v(y)}{|y|^p} dy \leq C(N,p)
\|v\|_\infty^{p/N} \|v\|_1^{(N-p)/N},
\end{equation*}
by letting $v(y) = \chi_{B(0,R)}(y) u(y+x_0)$, where $\chi_\Omega$
is the indicator function on a set $\Omega \subset \mathbb{R}^N$.
We point out that one could use H\"older inequality and
interpolation inequality on weak $L^p$ spaces to obtain a slightly
weaker inequality than above, but we will use an easier and more
elementary approach instead.

Let $w$ be an indicator function taking value $\|v\|_\infty$ on
some disk centered at 0 and taking value $0$ outside, where the
size of the disk is chosen such that $w$ and $v$ have the same
$L^1$ norm. More precisely, $w$ is given by
\begin{equation*}
w := \|v\|_\infty\chi_{B(0,r_0)}, \text{ where } r_0
:= \Big( \frac{\|v\|_1}{c_N \|v\|_\infty}\Big)^{1/N},
\end{equation*}
here $c_N$ is the volume of the unit ball in $\mathbb{R}^N$. Since
$\|v\|_1 = \|w\|_1$, it is straightforward to verify that
\begin{equation}\label{ineq:v_vs_w}
\int_{B(0,r)} v(y) dy \leq \int_{B(0,r)} w(y) dy \text{ for all }r\geq 0.
\end{equation}
Now we start with the left hand side of
\eqref{ineq:rearrangement}, and Fubini's theorem yields that
\begin{align*}
\int_{\mathbb{R}^N}  \frac{v(y)}{|y|^p} dy &= \int_{\mathbb{R}^N} \int_0^\infty v(y) \chi_{\{t\leq |y|^{-p}\}} dt dy = \int_0^\infty \int_{B(0,t^{-1/p})} v(y) dy dt\\
&\leq \int_0^\infty \int_{B(0,t^{-1/p})} w(y) dy dt = \int_{\mathbb{R}^N}  \frac{w(y)}{|y|^p} dy \\
&=  \frac{ N c_N \|v\|_\infty}{N-p} |r_0|^{N-p} = \frac{
N c_N^{p/N} \|v\|_\infty^{p/N}}{N-p} \|v\|_1^{(N-p)/N}\,,
\end{align*}
where \eqref{ineq:v_vs_w} was used.
\end{proof}

\subsection{Evolution of the third absolute moment}
Similar to the particle system case, we also start with estimating
the time derivative of the third absolute moment $M_3$. Here the
third absolute moment $M_3$ is given by
$$
M_3(t) := \int_{\mathbb{R}^N} \rho(t,x) |x|^3 dx,
$$
and note that in the continuum setting $M_3$ is indeed
differentiable in time for all $t\geq 0$, since $\rho(t,x)$ is a
classical solution. The same computation as \eqref{dm} leads to
$$
\frac{dM_3(t)}{dt} = -\frac{3}{2} \int_{\mathbb{R}^N}
\int_{\mathbb{R}^N} \rho(t,x)\rho(t,y) w'(|x-y|)
\frac{x-y}{|x-y|}\cdot (x|x|-y|y|) dydx.
$$
Due to the assumptions \textbf{(W-RAD)} and \textbf{(W-CONF)}  on
$W_a$, for any $A>0$ (which will be fixed at the end of this
subsection), there exists some $R_A>1$, such that the following
bound for  $-w'(r)$ holds, where $C_N$ is some constant only depending on $N$:
\begin{equation*}
-w'(r) \leq \begin{cases}
\dfrac{C_N}{r^{N-1}} & \text{for } 0<r< R_A,\\[0.3cm]
-\dfrac{A}{r^{1/N}}& \text{for }  r\geq R_A.
\end{cases}
\end{equation*}
Using this bound and \eqref{ineq_T}, $\frac{dM_3(t)}{dt}$ becomes
\begin{equation*}
\begin{split}
\frac{dM_3(t)}{dt} \leq \frac{3}{2}\int_{\mathbb{R}^N} \rho(t,x) &\Big[ \underbrace{\int_{B(x, R_A)} \frac{C_N\, \rho(t,y)}{|x-y|^{N-2}}(|x|+|y|) dy}_{T_r^x} -  \\
&\underbrace{\int_{\mathbb{R}^N\setminus B(x, R_A)} \rho(t,y)
\frac{A}{2}|x-y|^{(N-1)/N} (|x|+|y|) dy}_{T_a^x}\Big] dx\,.
\end{split}
\end{equation*}

Similar to Section 2, we again claim that $T_r^x \leq T_a^x/2$ for
$|x|>2R_A$. We start with controlling $T_r^x$. It is easy to check
that
$$
T_r^x \leq (2|x|+R_A) \int_{B(x, R_A)}  \frac{C_N \,
\rho(t,y)}{|x-y|^{N-2}} dy.
$$
Note that the singularity of the Newtonian kernel is more
difficult to treat than in Section 2. We compensate this
difficulty by using the fact that $\rho(t,x)$ is uniformly bounded by $M_0$
from Lemma \ref{lemma:u_infty}. Hence for $N\geq 2$, we are able
to apply Lemma \ref{lemma:integral} to $\rho(t,\cdot)$, and obtain
\begin{align}
T_r^x &\leq C_3 (2|x|+R_A) \left(\int_{B(x, R_A)} \rho(t,y) dy\,
\right)^{2/N}\label{ineq:t_r_newton}
\end{align}
here $C_3$ only depends on $N$ and $M_0$ as obtained in Lemma
\ref{lemma:u_infty}.

To simplify notation, from now on, we define by $m(t,x)$ the mass
of $\rho$ within radius $R_A$ of $x$ at time $t$, namely
$$
m(t,x) := \int_{B(x, R_A)} \rho(t,y) dy.
$$
Recall that in the beginning of this section we assume that $\rho_0$
integrates to 1, which implies that $\rho(t, \cdot)$ also integrates to 1 for all $t\geq 0$. Then, for any $t$ and $x$, one of the two following
scenarios must be true: either $m(t,x) \geq \frac{1}{2}$, or
$1-m(t,x) > \frac{1}{2}$.

If $m(t,x) \geq \frac{1}{2}$ at some $|x|\geq 2R_A$, it follows
that $m(t,x)^{2/N}$ is comparable to $m(t,x)$. Hence, using
\eqref{ineq:t_r_newton}, we get
\begin{equation*}
\begin{split}
T_r^x&\leq C_3 \,2^{1-2/N} ~m(t,x)  (2|x|+R_A)\\
&\leq 8 C_3\int_{B(x, R_A)} \rho(t,y) (|x|+|y|) dy \quad
(\text{since } |x| \geq 2R_A)\,.
\end{split}
\end{equation*}
Hence by repeating the same argument on the center of mass as in
Section 2 (see \eqref{center_mass_ineq} and the paragraph after
it), we can  choose $A$ to be sufficiently large, then we would
obtain that $T_r^x \leq T_a^x/2$.

On the other hand, if the opposite scenario is true at some
$|x|\geq 2R_A$, i.e.
$$
\int_{\mathbb{R}^N \setminus B(x,R_A)} \rho(t,y) dy >
\frac{1}{2}\,,
$$
then one can directly bound $T_r^x$ by $C(W,N) |x|$ by applying
Lemma \ref{lemma:u_infty} and Lemma \ref{lemma:integral}.
Meanwhile it follows directly from the definition of $T_a^x$ that
$T_a^x \geq \frac{A}{4} |x|$, hence by choosing $A$ sufficiently
large we obtain that $T_r \leq T_a/2$.

Finally, we choose $A$ to be the maximum value needed in the two
scenarios. As a result, $T_r \leq T_a/2$ holds for for all $|x|
\geq 2R_A$, implying that
\begin{equation}
\frac{dM_3(t)}{dt} \leq C_4 - C_5 \int_{\mathbb{R}^N \setminus
B(0, 2R_A)} \rho(t,x) |x| \int_{\mathbb{R}^N \setminus B(x, R_A)}
\rho(t,y) dydx, \label{ineq:dm3dt_newton}
\end{equation}
where $C_4, C_5$ only depends on $W_a$, $N$ and
$\|\rho_0\|_\infty$. Note that this inequality is parallel to the
inequality \eqref{dm_3} for the discrete case.

\subsection{Coupling with the evolution of the support}
Next we will proceed similarly as in Section \ref{couple_discrete}, where most of the
arguments are parallel. We will quickly go through the similar
parts in the proof, and emphasize the differences caused by the
Newtonian repulsive kernel.

At time $t$, we can find $x_0\in\partial\text{supp}(\rho_0)$
depending on $t$, such that
$$
|X_t(x_0)| = R(t) \qquad \mbox{and} \qquad  \frac{d R(t)}{dt} \leq
-(\nabla W\ast \rho)(X_t(x_0), t)\cdot \frac{X_t(x_0)}{R(t)} \,,
$$
similarly to \cite{BCL,BalagueCarrillo}.

Due to \textbf{(W-CONF)}, for any large constant $K_3$ to be determined later, there
exists some radius $R_{K_3} > 6R_A$ such that $w'(r) > K_3/
r^{1/N}$ for all $r > R_{K_3}$. Hence whenever $R(t) > R_{K_3}$,
the growth of $R(t)$ is now controlled by
\begin{equation}
\begin{split}
\frac{d R(t)}{dt} \leq  \int_{B(X_t(x_0), R_A)} \frac{C_N
\,\rho(t,y)}{|y-X_t(x_0)|^{N-1}} dy - \frac{K_3}{12 R(t)^{1/N}},
\label{ineq:drdt_newton_1}
\end{split}
\end{equation}

where the second term on the right hand side is obtained in the
same way as the last term in \eqref{dr_2}, except that the power
$1$ is replaced by $1/N$ due to \textbf{(W-CONF)}.

To deal with the singularity in the first term on the right hand
side, recall that  $\|\rho(t,\cdot) \|_\infty$ is bounded above by
$M_0$ for all time due to Lemma \ref{lemma:u_infty}, which again
enables us to apply Lemma \ref{lemma:integral} to obtain
\begin{equation}
\begin{split}
\frac{d R(t)}{dt} \leq C_6 m(t,X_t(x_0))^{1/N}- \frac{K_3}{12
R(t)^{1/N}}, \label{ineq:drdt_newton}
\end{split}
\end{equation}
where $C_6$ only depends on $N$, $W_a$ and $\|\rho_0\|_\infty$.

Similar to Section \ref{couple_discrete}, we can find some time interval $[t_1, t_2]$,
such that $R(t)$ increases from $R_{K_3} $ to $A_2 R_{K_3} $
within $[t_1, t_2]$, and $\dot R(t_2) > 0$. Here $A_2$ is a
sufficiently large number to be determined at the end of this
subsection. Then we have
$$
\int_{t_1}^{t_2} \frac{dR(t)}{dt} R(t)^{1/N}  > 0,
$$
implying that
\begin{equation}
\int_{t_1}^{t_2} \left( C_6~ m\big(t,X_t(x_0)\big)^{1/N}
R(t)^{1/N} - \frac{K_3}{12} \right) dt> 0. \label{ineq:1/d}
\end{equation}
We apply H\"older's inequality on \eqref{ineq:1/d}, and obtain that
\begin{equation}
\int_{t_1}^{t_2}  m\big(t,X_t(x_0) \big)R(t) dt>
\left(\frac{K_3}{12 C_6}\right)^N (t_2-t_1). \label{ineq:1/d_holder}
\end{equation}
Note that this extra step is needed here but unnecessary in
Section 2, due to the different powers in \textbf{(NL-CONF)} and
\textbf{(W-CONF)}.

Now we are ready to couple the growth of $M_3$ with
\eqref{ineq:1/d_holder}. Since $R(t) > 6 R_A$ for all $t\in[t_1,
t_2]$ (recall that when defining $R_{K_3}$ we set it to be greater
than $6R_A$), we could treat \eqref{ineq:dm3dt_newton} in the same
way as we did in \eqref{dm_4} and \eqref{dm_final}, and bound the
growth of $M_3$ as follows:
\begin{equation}
\frac{dM_3(t)}{dt} \leq (C_4+ \frac{1}{3}C_5 R_A) - C_5
m\big(t,X_t(x_0)\big) R(t) \label{dm_final_newton}.
\end{equation}
Then we integrate \eqref{dm_final_newton} in $[t_1, t_2]$, and it becomes
$$
\int_{t_1}^{t_2}\Big(C_5 m\big(t, X_t(x_0)\big)  R(t) - (C_4 +
\frac{1}{3}C_5 R_A) \Big) dt \leq M_3(t_1) - M_3(t_2).
$$
By putting the above inequality together with \eqref{ineq:1/d_holder}, we can fix $K_3$ to be sufficiently large such that $M_3(t_1) > M_3(t_2)$.

Finally, we prove that if $A_2$ is sufficiently large, we would have
$M_3(t_2) > M_3(t_1)$, hence causing a contradiction. It follows from \eqref{ineq:drdt_newton}
and $\dot R(t_2) > 0$ that
$$
C_6 m\big(t_2,X_{t_2}(x_0)\big)^{1/N} - \frac{K_3}{12
R(t_2)^{1/N}}
> 0,
$$
implying that
\begin{equation*}
M_3(t_2) \geq m\big(t_2,X_{t_2}(x_0) \big) \big(A_2 R_{K_3}
-R_{K_3} \big)^3\gtrsim  A_2^3 R_{K_3}^2,
\end{equation*}
which can be made to be greater than $M_3(t_1)$ if $A_2$ is chosen
to be sufficiently large, thus we obtain a contradiction with
$M_3(t_1) > M_3(t_2)$.  This means that $R(t)$ can never reach
$A_2 R_{K_3}$, thus implies the confinement of support for all
times.

\begin{remark}
Let us emphasize that for potentials $W$ given by
\eqref{W_newton_rep}, we are only able to prove confinement in the
continuum setting, not in the particle setting. The reason is that
in the coupling method we use, we need to bound the repulsion part
of $\frac{d}{dt}R(t)$ using the mass in some neighborhood of the
outermost particle. In the continuum setting this is achieved by
first obtaining an $L^\infty$ bound on $\rho(t,\cdot)$ in Lemma
{\rm\ref{lemma:u_infty}}, then applying Lemma {\rm
\ref{lemma:integral}} to arrive to \eqref{ineq:drdt_newton}.
However, in the particle setting, we are unable to obtain a bound
on the ``local density'' of the particles that is independent of
the particle number, and we are unaware of any such results for
repulsive-attractive kernels to the best of our knowledge.
Intuitively we do expect the ``density'' of particles to be
bounded, since the singular repulsion would not allow the
particles to be densely concentrated.  We find it an interesting
open problem to prove some non-local version of Lemma
{\rm\ref{lemma:u_infty}} for the particle system \eqref{odes} with
$W$ given by \eqref{W_newton_rep}.
\end{remark}

\section{Numerics}
In this section we numerically check the confinement properties of
several potentials together with the long-time behavior of the
corresponding particle systems. Let us remark that in all the
cases we have simulated, for which confinement holds, the long
time behavior of the system seems to converge toward a compactly
supported stationary state. In some of the potentials below, this
has not been rigorously proved. This is an interesting theoretical
question that will be treated elsewhere. Our objective in this
section is to check if the conditions under which confinement has
been shown in previous sections are sharp or not. With this aim,
we remind, as it was said in Section 2, that equation \eqref{pde}
is a gradient flow of the interaction energy
$$
E[\rho]=\frac{1}{2}\int_{\mathbb{R}^N}\int_{\mathbb{R}^N}W(x-y)\,\rho(x)\,
\rho(y)\,dx\,dy
$$
with respect to the Wasserstein distance. Thus, stable stationary
states of \eqref{pde} are local minimizers of the interaction
energy.

In Section 2 we have shown that the radius $R(t)$ defined by
\eqref{def:R} is bounded by a constant $\overline{R}$ that depends
only on the potential $W$ and the initial data. Moreover,
$\overline{R}$ is independent of the number of particles and under
certain additional assumptions, see Section 2, we know that the
particle systems are indeed good approximations of the solutions
to the continuum model \eqref{pde}. For this reason, we have
chosen a particle framework to perform our numerical
investigation. We also follow the idea of decreasing the energy
since stationary states are local minimizers of the energy. Given
$n$ particles located at $x_1$, \dots, $x_n\,\in\,\mathbb{R}^N$
with masses $m_1=m_2=\cdots=m_n=1/n$, their discrete interaction
energy is given by
\begin{equation*} \label{eq:intercenergyparticles}
E[x_1,\dots,x_n]=\frac{1}{2n^2}\sum_{i=1}^{n}\sum_{\substack{j=1\\j\neq i} }^n W(x_i-x_j).
\end{equation*}

The simulations are done by an explicit Euler scheme leading to a
trivial gradient descent method as long as the energy is
decreasing at each time step. This method allows to efficiently
solve for stationary states of \eqref{odes}. In stiffer
situations, as for the Morse potentials below, an explicit
Runge-Kutta method is used instead. These methods are essentially
the same as the ones used in \cite{BUKB,soccerball} for finding
stationary states of different repulsive-attractive potentials.
Our stopping criterion is to achieve a numerical steady state. For
us, a numerical steady state is a particle distribution for which
the discrete $l^\infty$-norm of the velocity field in \eqref{odes}
is below some predetermined threshold, which we impose to be
$0.001/n$.

The section is divided into three subsections, each one showing
the results for a particular chosen potential. The limit growth
for the attractiveness of the potential at infinity under
condition \textbf{(NL-CONF)} is $\log(r)$. For this reason in
Section 4.1 we constructed a piecewise potential with exact
logarithmic attraction at infinity. This selection has been done
to check the sharpness of condition \textbf{(NL-CONF)}. In
Subsection 4.2 we go further to take a piecewise potential growing
at infinity exactly like $\log(\log(r))$, which grows even slower
at infinity compared to $\log(r)$. This potential does not satisfy
the condition \textbf{(NL-CONF)}. At the end of this section, in
Subsection 4.3, we also analyze the case of the Morse potential.
This potential is known to be a repulsive-attractive potential
under certain choices of the parameters with negligible attractive
strength at infinity, i.e., $W(x)\to 0$ as $|x|\to \infty$. These
potentials are more interesting in terms of biological relevance
as discussed in \cite{Dorsogna,CMP}.

As a final remark, we point out that all the used potentials are
not singular at the origin and simulations are performed in
dimension $N=2$.


\subsection{Logarithmic attraction at infinity}
We show confinement when the potential has exact logarithmic
attraction at infinity. The chosen potential is
\begin{equation}\label{eq:potlog}
w(r)=
\begin{cases}
{\frac {95}{2}}\,{r}^{2}-{\frac {83}{6}}\,r-64\,{r}^{3}+{\frac {239}{6}}\,{r}^{4}-\frac{19}{2}\,{r}^{5}&0\leq r\leq 1, \\ \log  \left( r \right) &r>1.
\end{cases}
\end{equation}
It can be checked that it is a repulsive-attractive satisfying
$w(0)=w(1)=0$, $w(r)\in C^3(0,+\infty)$, and the repulsion at the
origin is $\simeq -r$. The stationary states are shown in
Table~\ref{tab:tlog}.
\begin{table}[h]
\centering
\begin{tabular}{||c|c|c|c||}
\hline $n=100$ & $n=200$ & $n=500$ & $n=1000$\\\hline
\scalebox{0.23}{\includegraphics{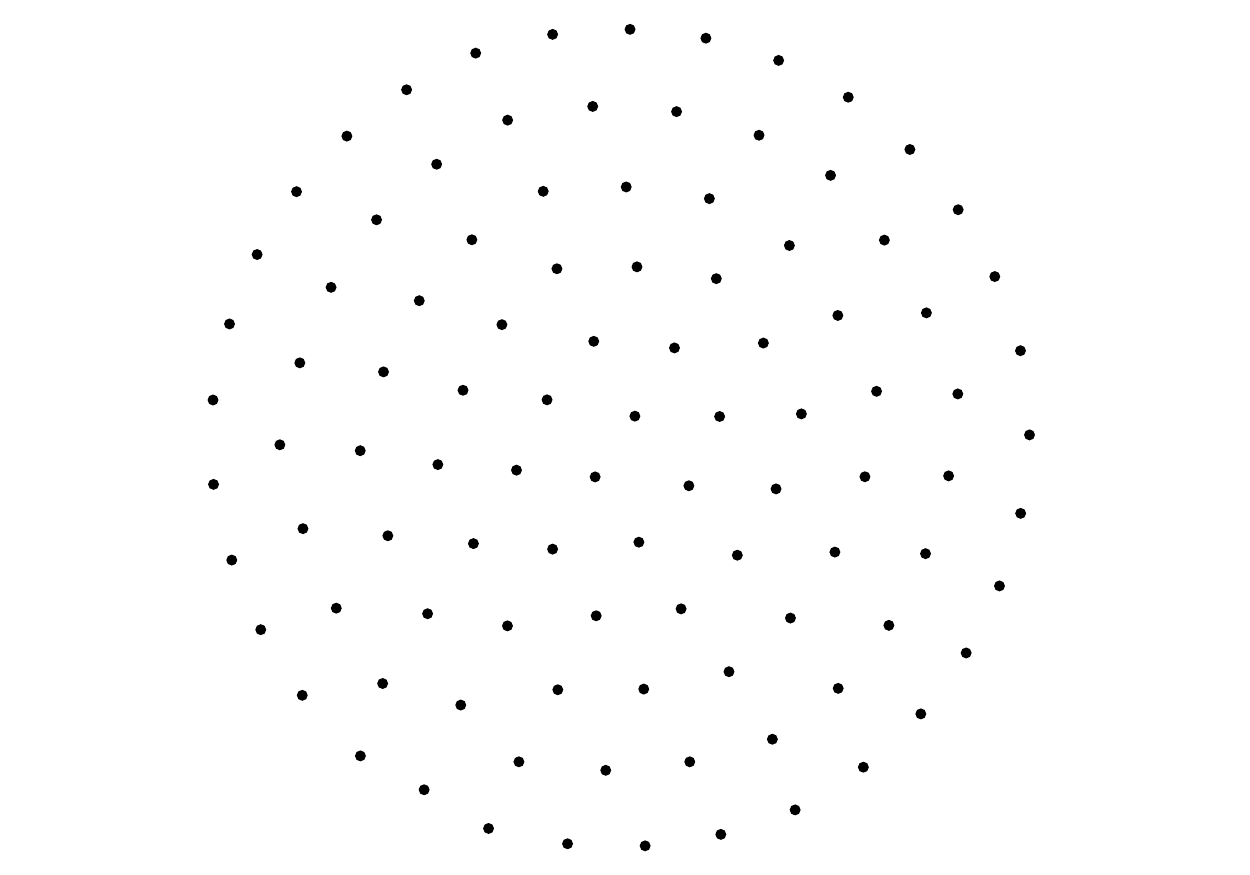}} &
\scalebox{0.23}{\includegraphics{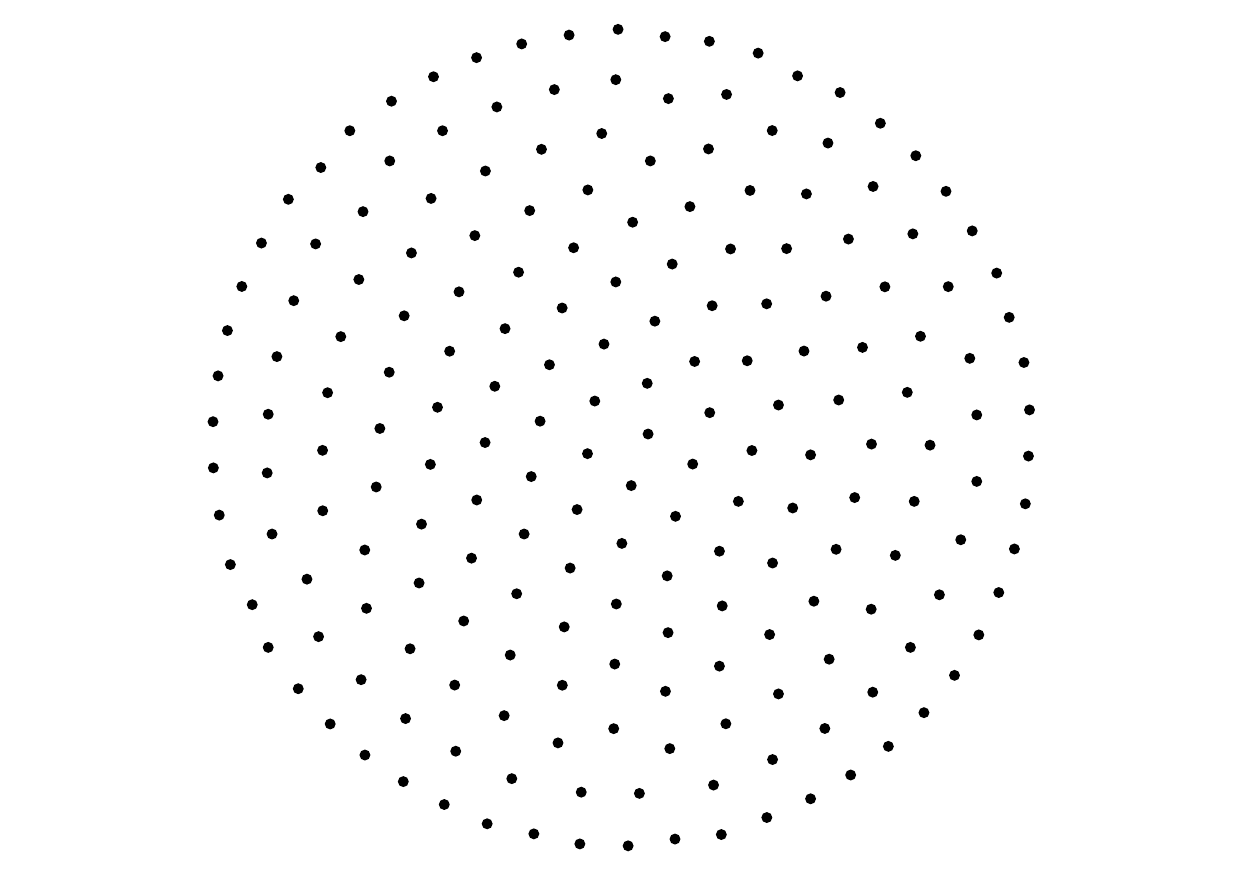}} &
\scalebox{0.23}{\includegraphics{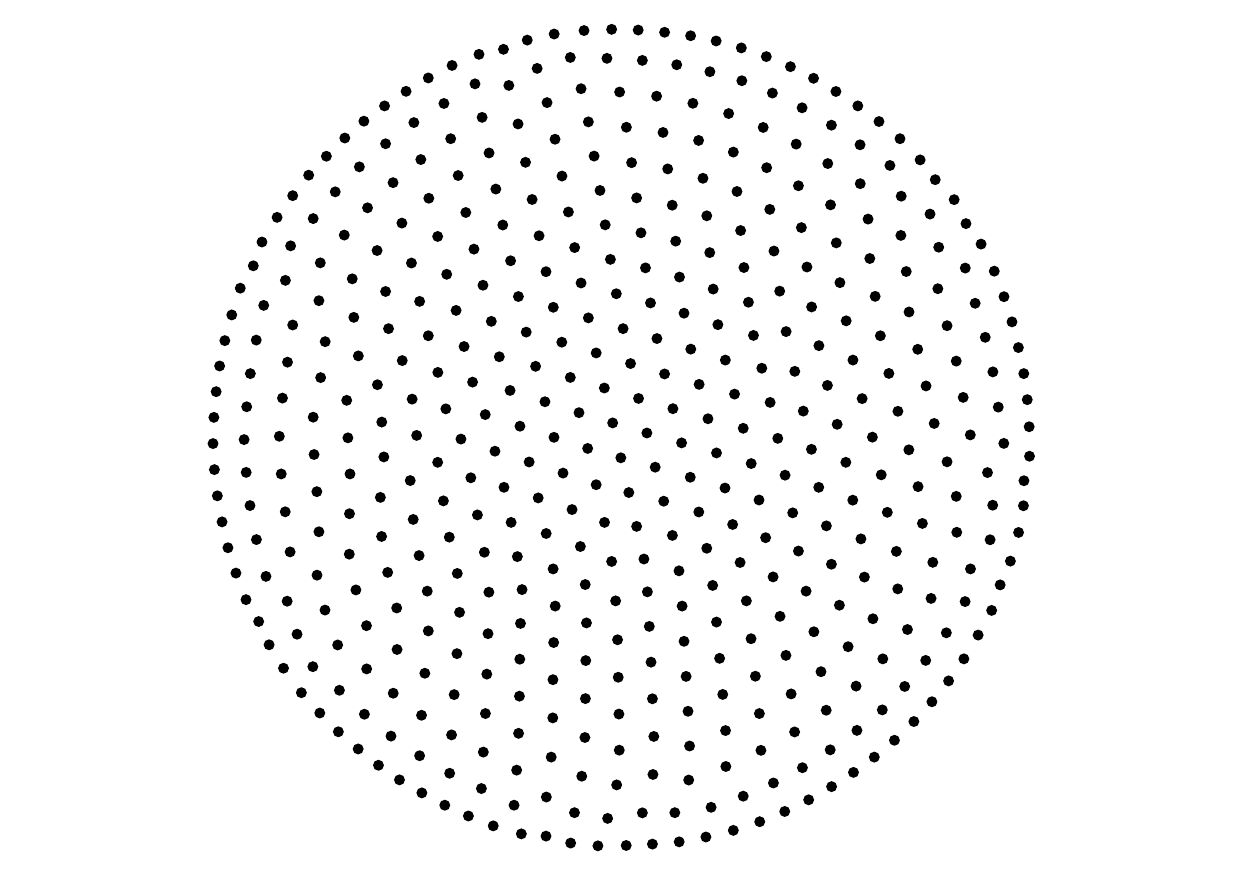}} &
\scalebox{0.23}{\includegraphics{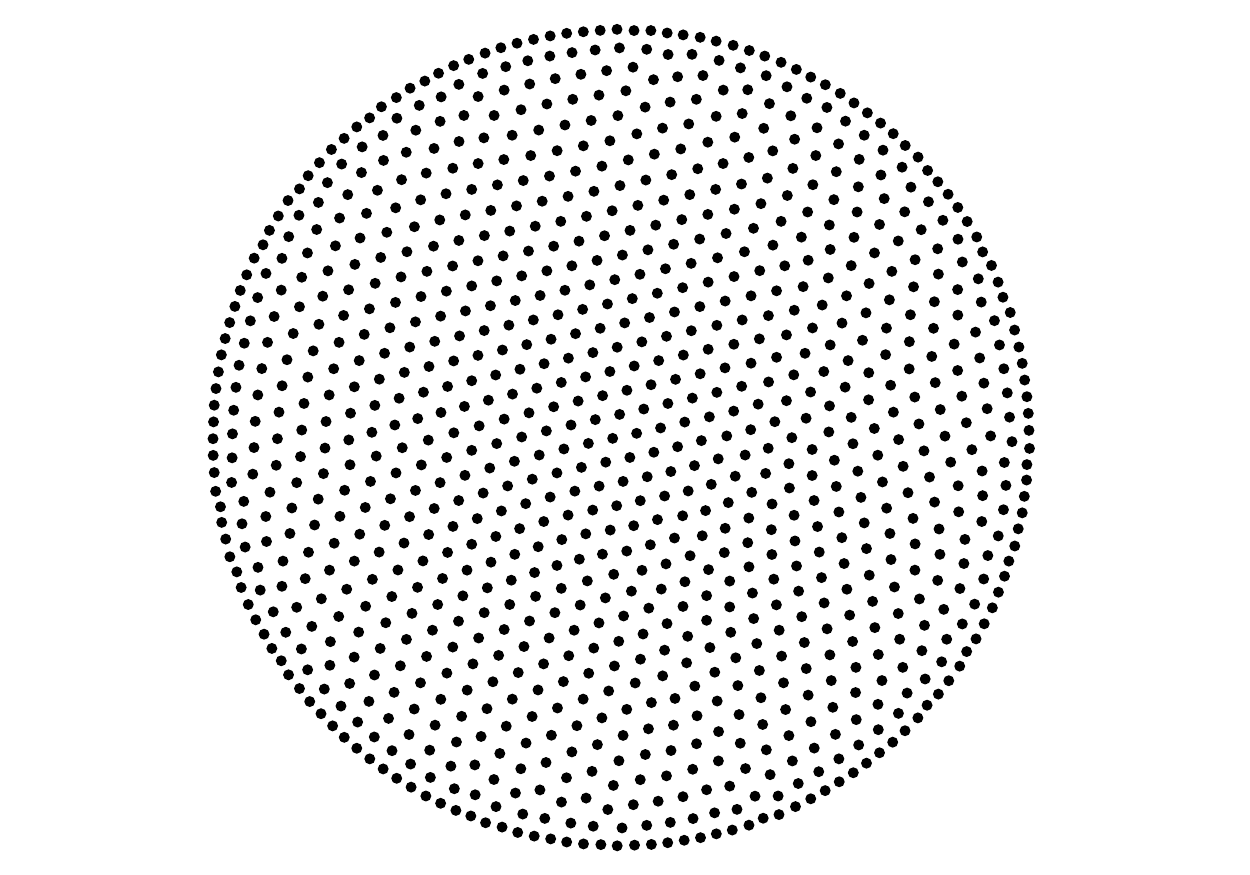}} \\\hline
$r\sim0.2286$ & $r\sim0.2315$ & $r\sim0.2343$ & $r\sim 0.2352$
\\\hline
\end{tabular}
\caption{Stationary states and radius of their support as a
function of the number of particles for the potential $w(r)$ given
in equation \eqref{eq:potlog}.} \label{tab:tlog}
\end{table}
We have chosen initial data in such a way that the particles feel
the logarithmic interaction by randomly placing $n$ particles in a
centered square in such a way that $|x_i-x_j|>{\rm 1}$ for some
values of $i,j\in\lbrace1,2,\dots,n\rbrace$. We have run
simulations varying the number of particles $n$ and the initial
data. For each simulation the center of mass $C_n$ for the
particle system was computed and then
\[
   r_n(t)=\max_{1\leq j\leq n}|x_j(t)-C_n|.
\]
It is observed that $r_n(t)<1$ in all the cases for large times
and it converges to some asymptotic value. This fact can be
explained because when all the particles are out of the range of
the $\log(r)$ part then the radius and the behavior depends only
on the polynomial part of the potential. Simulations indicate that
there is confinement for this potential and thus, condition
\textbf{(NL-CONF)} is not sharp. Figure~\ref{fig:evoradlog} shows
the evolution of the radius as a function of the particle number
$n$ and as a function of time for a particular initial data.

\begin{figure}[h]
\centering
\subfigure[]{\includegraphics[scale=0.34]{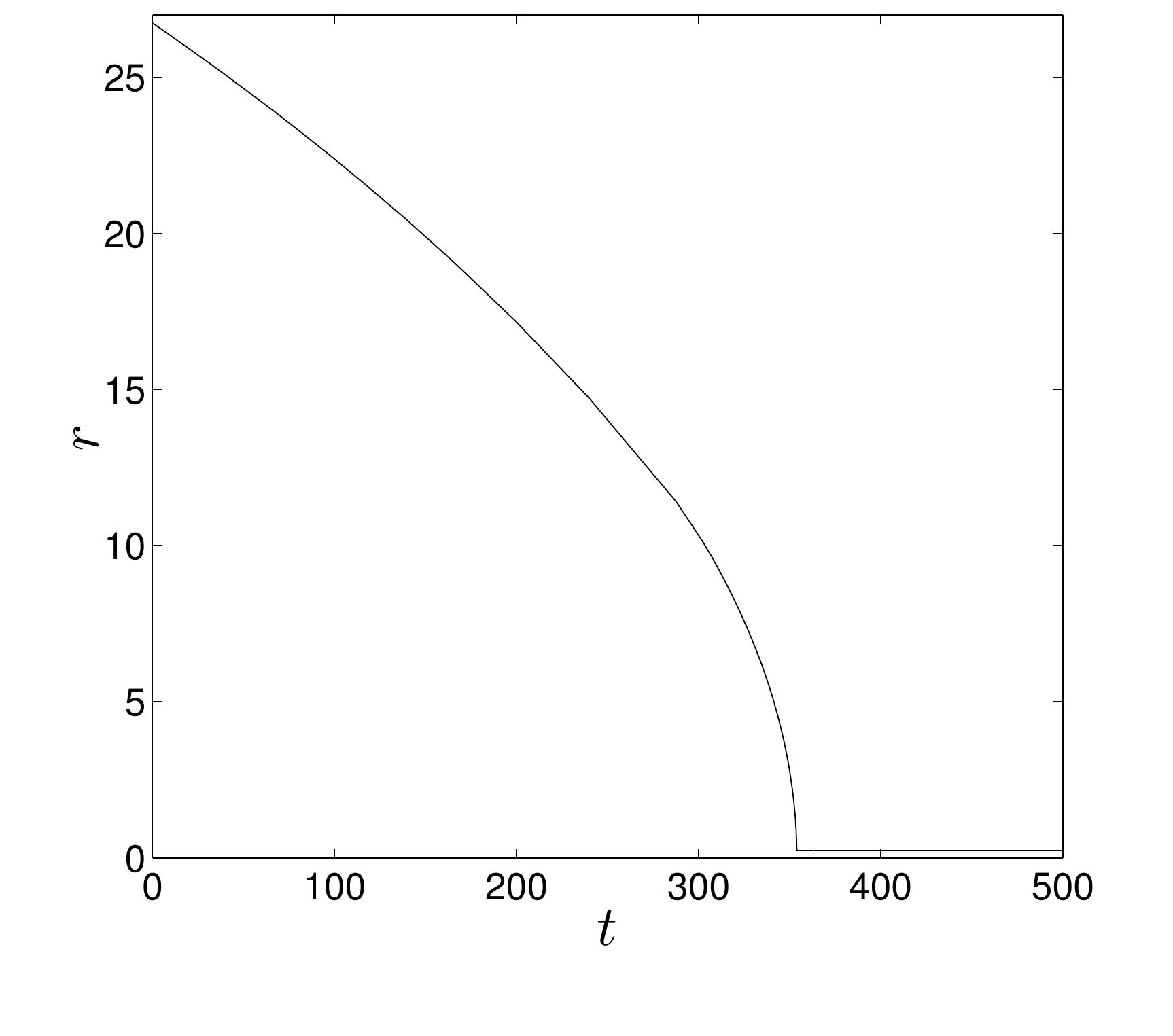}}
\subfigure[]{\includegraphics[scale=0.32]{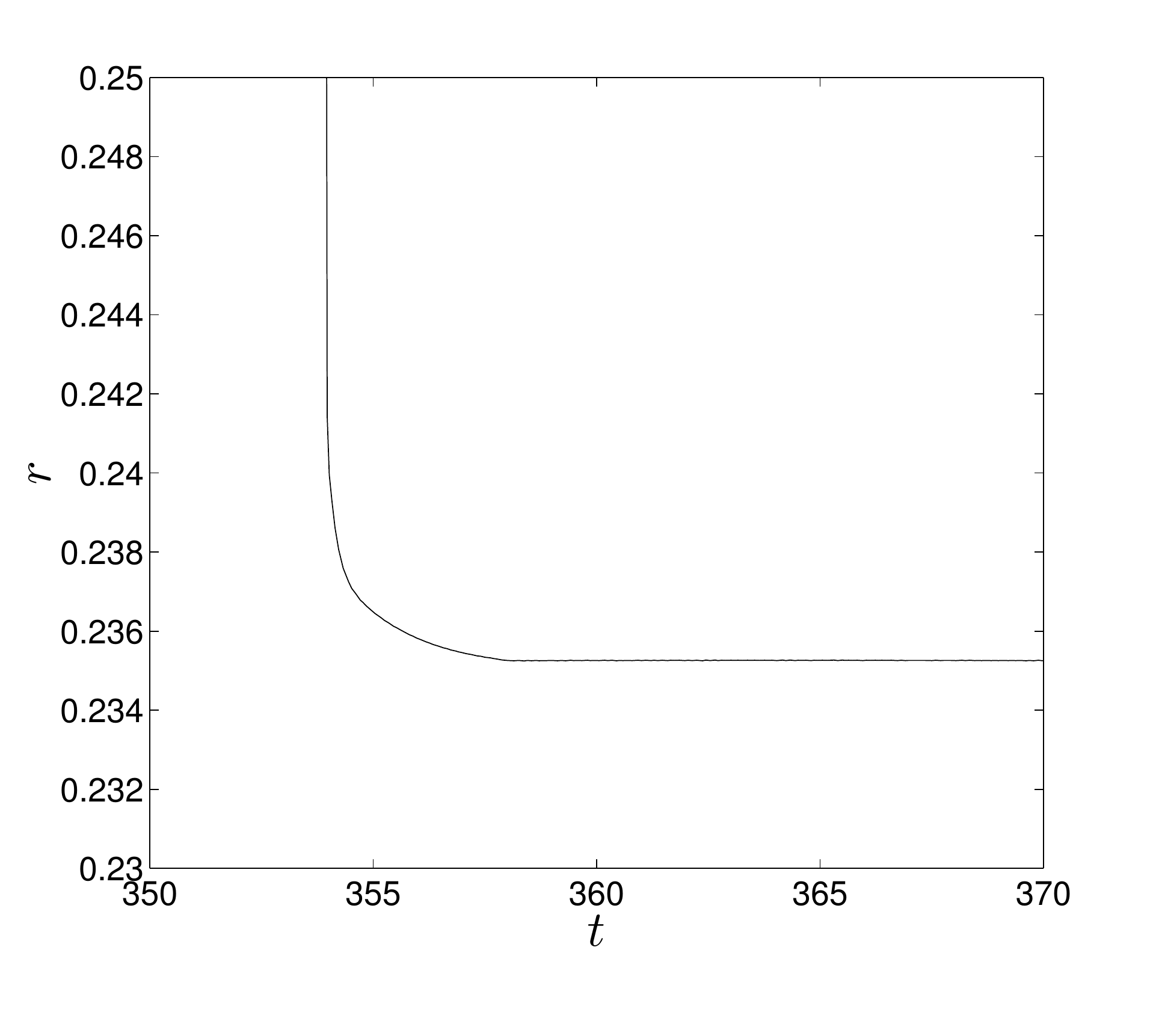}}
\subfigure[]{\includegraphics[scale=0.34]{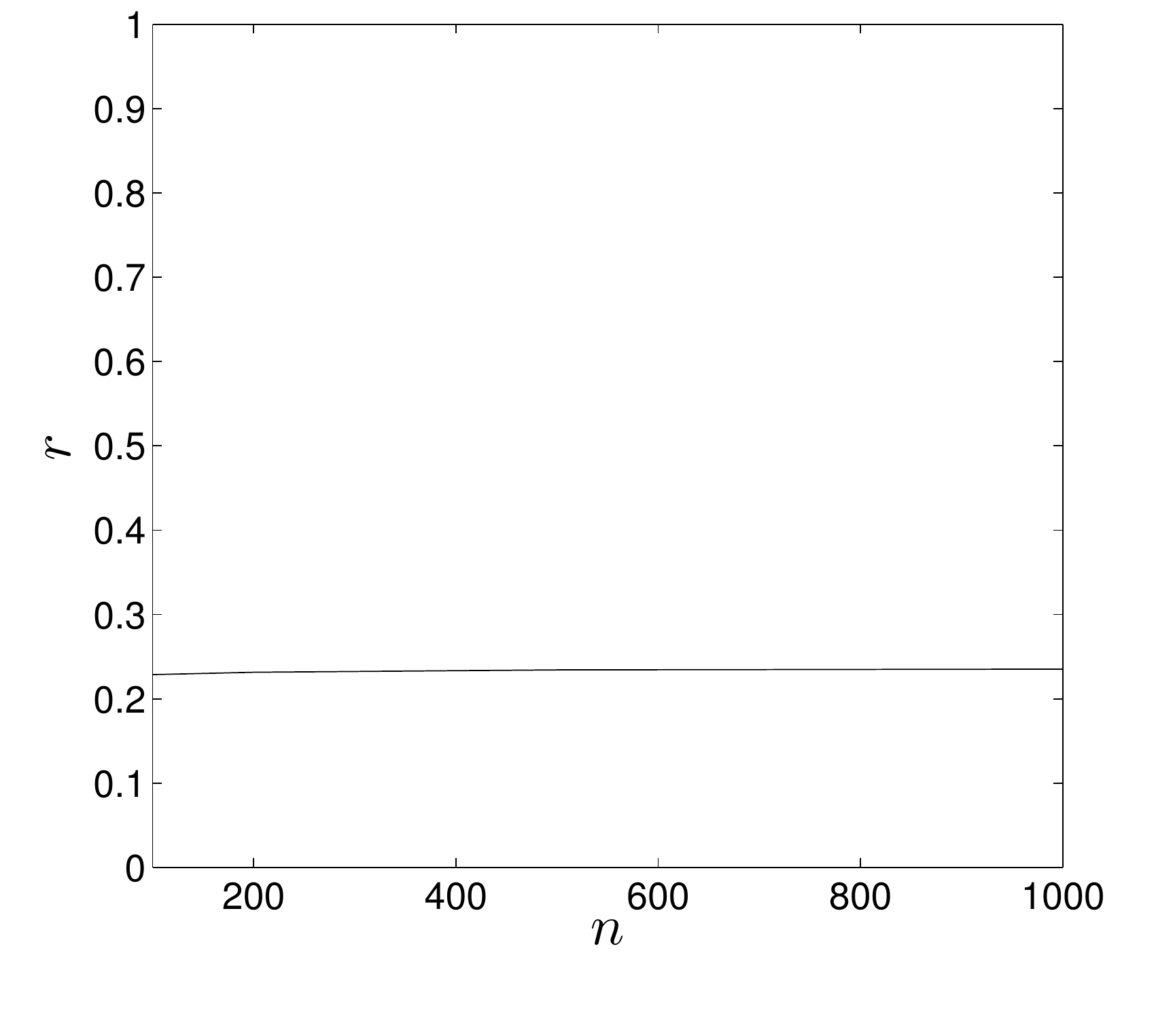}}
\caption{(a) Evolution of the radius as a function of $t$ for
$n=1000$. (b) Zoom of picture (a). (c) Evolution of the radius as
a function of $n$.} \label{fig:evoradlog}
\end{figure}

\subsection{Log-log attraction at infinity}

In this case we consider a potential behaving like $\log(\log(r))$
at infinity
\begin{equation}\label{eq:potloglog}
w(r)=
\begin{cases}
{\frac {r \left( r-{{\rm e}} \right) }{ {{\rm e}}^{2}}}-2\,{\frac {r \left( r-{{\rm e}} \right) ^{2}}{
 {{\rm e}} ^{3}}}+{\frac {19}{6}}\,{\frac {r
 \left( r-{{\rm e}} \right) ^{3}}{ {{\rm e}}^{4}}}
&0\leq r\leq {\rm e}, \\ \log\left(\log  \left( r \right)\right) & r>{\rm e}.
\end{cases}
\end{equation}
This potential satisfies $w(0)=w({\rm e})=0$, it is
$C^3(0,+\infty)$, and the repulsion at the origin is $\simeq -r$.
The numerical experiments suggest that by increasing the number of
particles, the radius of the support of the stationary state
increases and stabilizes. Table~\ref{tab:tloglog} shows the
stationary states as a function of the number of particles $n$ for
this potential. These numerical simulations, together with the
evolution of the radius of the support both in time and as a
function of the number of particles not shown here, indicate that
even if the growth at infinity of the potential is less than
$\log(r)$ there is still confinement.

\begin{table}[ht]
\centering
\begin{tabular}{||c|c|c|c||}
\hline $n=100$ & $n=200$ & $n=500$ & $n=1000$\\\hline
\scalebox{0.23}{\includegraphics{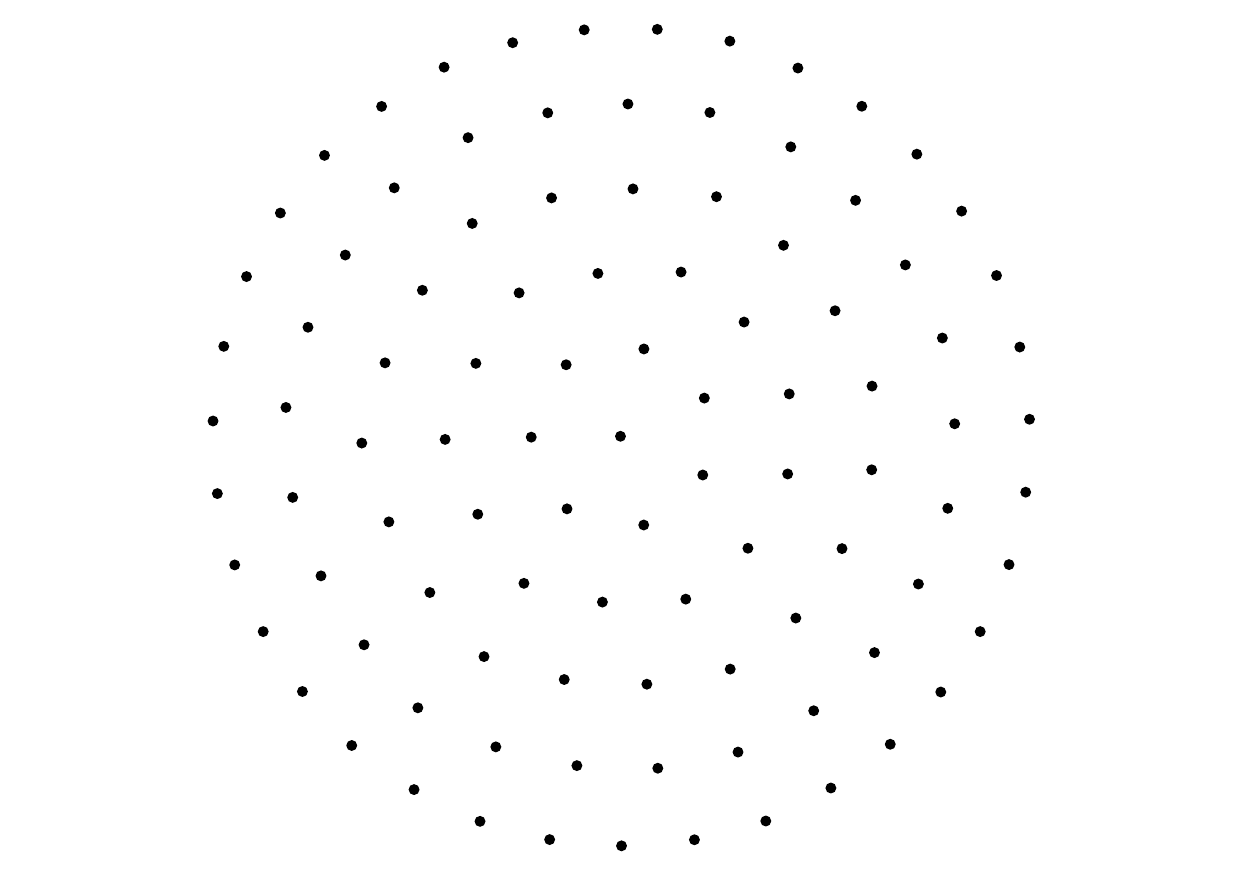}} &
\scalebox{0.23}{\includegraphics{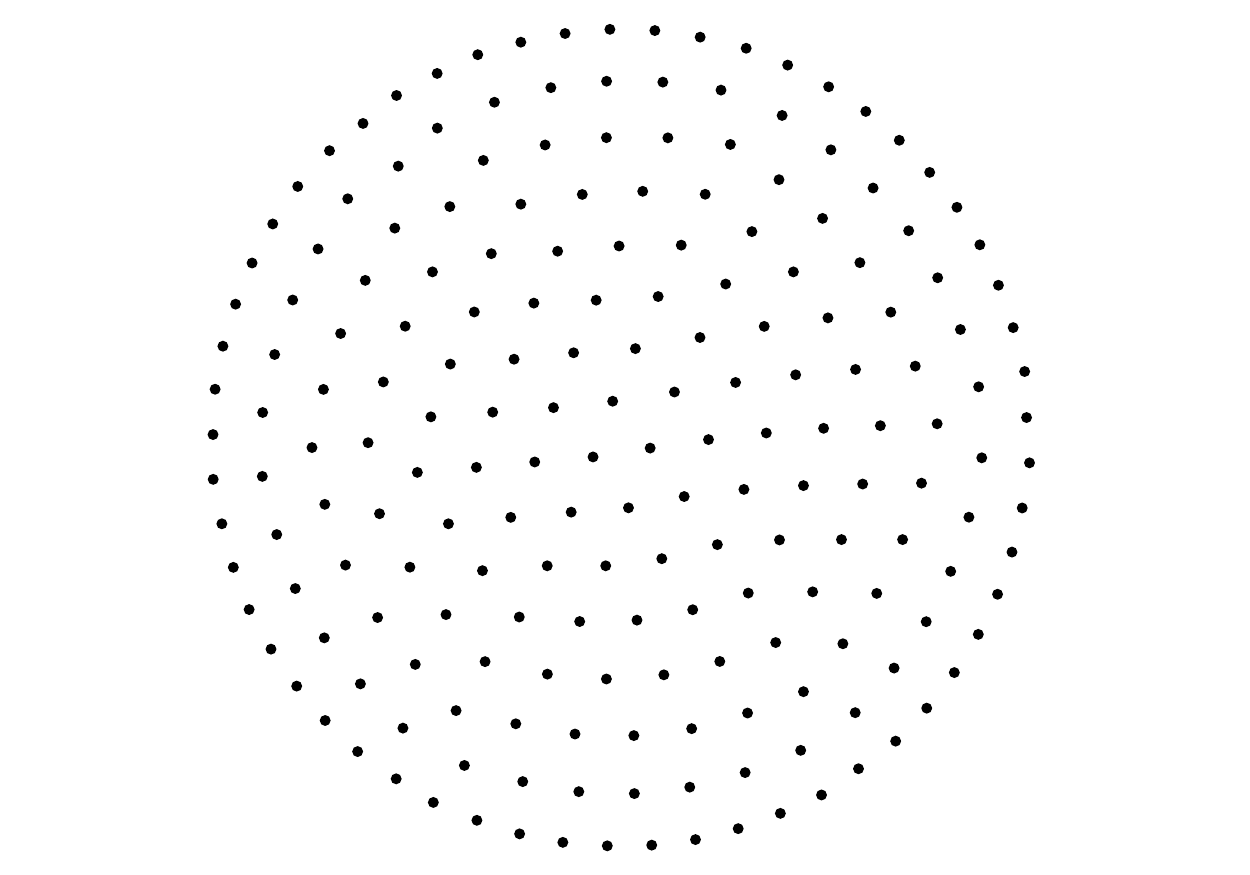}} &
\scalebox{0.23}{\includegraphics{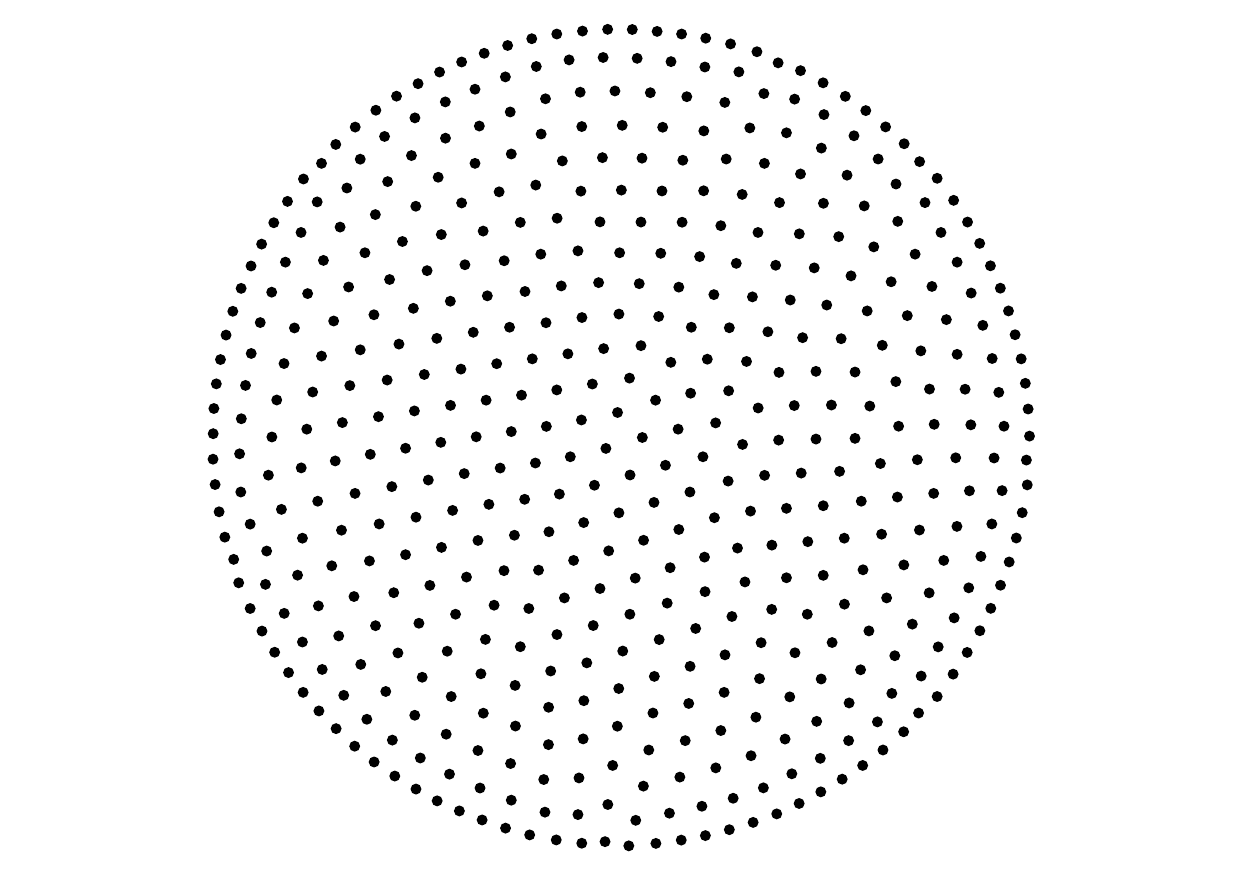}} &
\scalebox{0.23}{\includegraphics{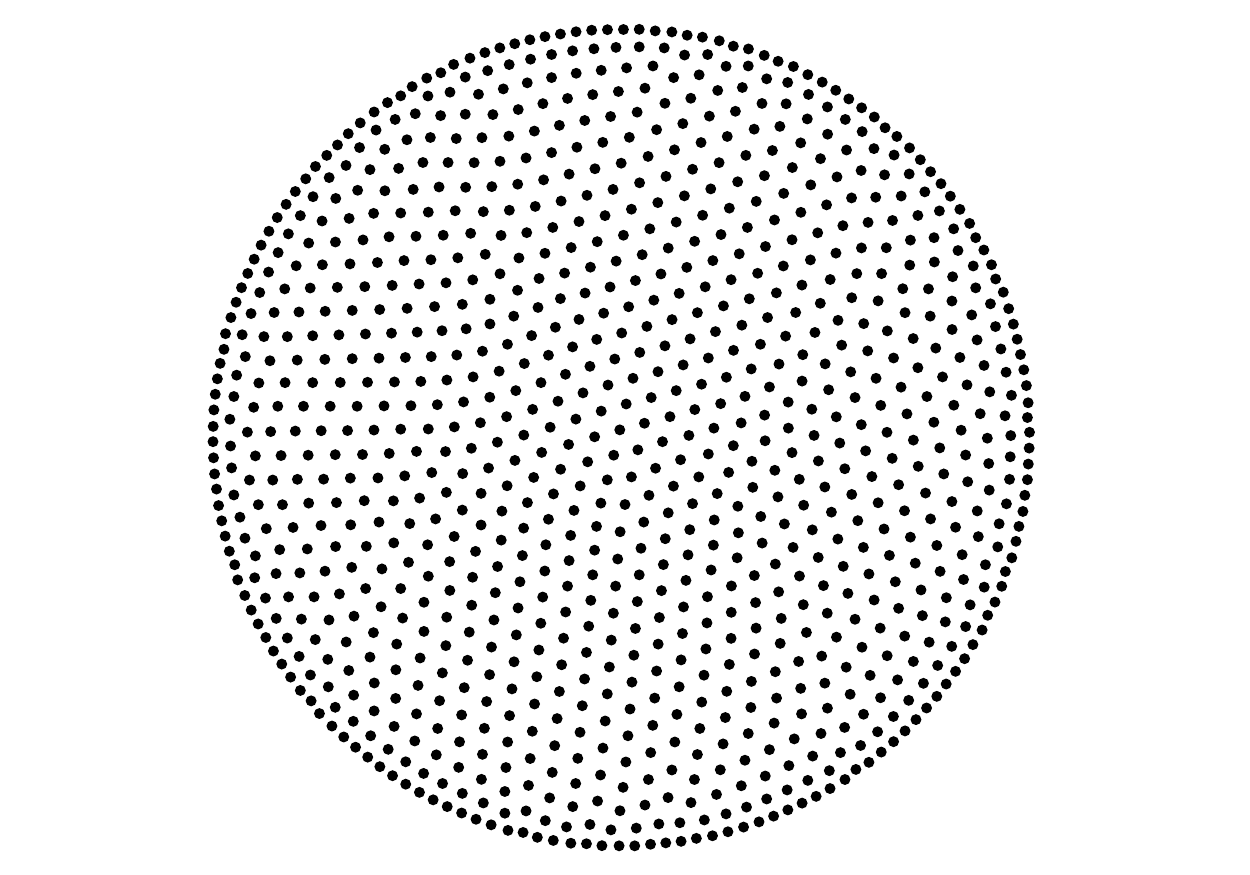}} \\\hline
$r\sim0.7838$ & $r\sim0.7954$ & $r\sim0.8052$ & $r\sim 0.8085$
\\\hline
\end{tabular}
\caption{Stationary states and radius of their support as a
function of the number of particles for the potential $w(r)$ given
in equation \eqref{eq:potloglog}.} \label{tab:tloglog}
\end{table}


\subsection{Morse potential}
The usual form of this potential is the following
\[
    U(r)=-C_Ae^{-r/l_A}+C_Re^{-r/l_R},
\]
where constants $C_A$ and $C_R$ are the attraction and repulsion
strength respectively and the constants $l_A$ and $l_R$ are their
respective length scales. For our simulations we will take the
scaling shown in \cite{Dorsogna,CMP}. That is,
\begin{equation*}\label{eq:morse}
U(r)=C_A(V(r)-C\,V({r/l })),
\end{equation*}
where $V(r)=-\exp(-r/l_A)$ and $C=C_R/C_A$ and $l=l_R/l_A$. It is
known for this potential \cite{Dorsogna,CMP} that for $C>1$ and
$l<1$ the potential $U(r)$ is short-range repulsive and long-range
attractive with a unique minimum defining a typical distance
between particles. Also, in this regime, the condition $Cl^N=1$
distinguishes between the so-called H-stable and catastrophic
regimes.

\noindent{\bf H-stable case:} In our simulations we fix the
parameters as $C_A=l_A=1$, $C_R=1.9$, and $l_R=0.8$ leading to
$C=1.9$, $l=0.8$, and $Cl^2=1.216>1$.

\begin{table}[ht]
\centering
\begin{tabular}{||c|c|c|c||}
\hline $n=100$ & $n=500$ & $n=1000$ & $n=2000$\\\hline
\scalebox{0.23}{\includegraphics{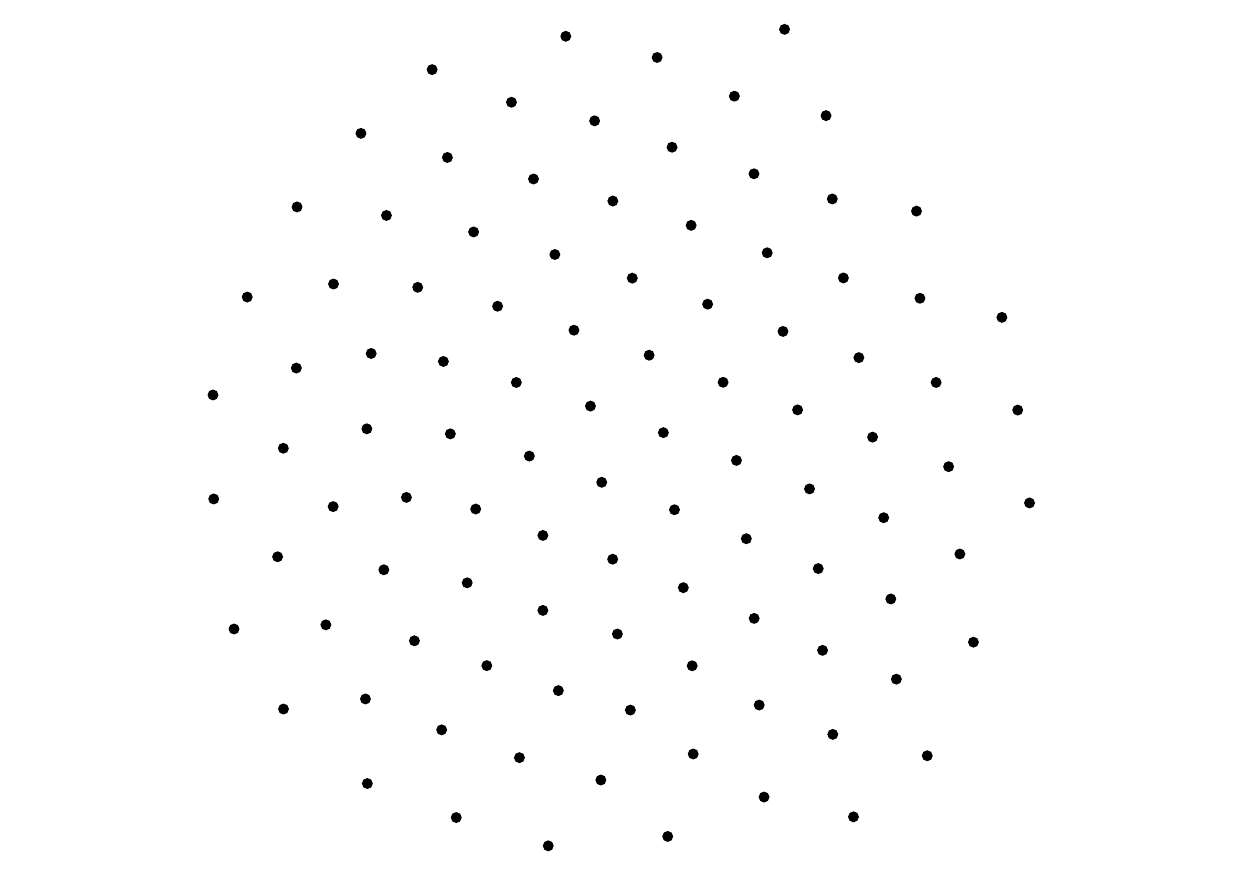}} &
\scalebox{0.23}{\includegraphics{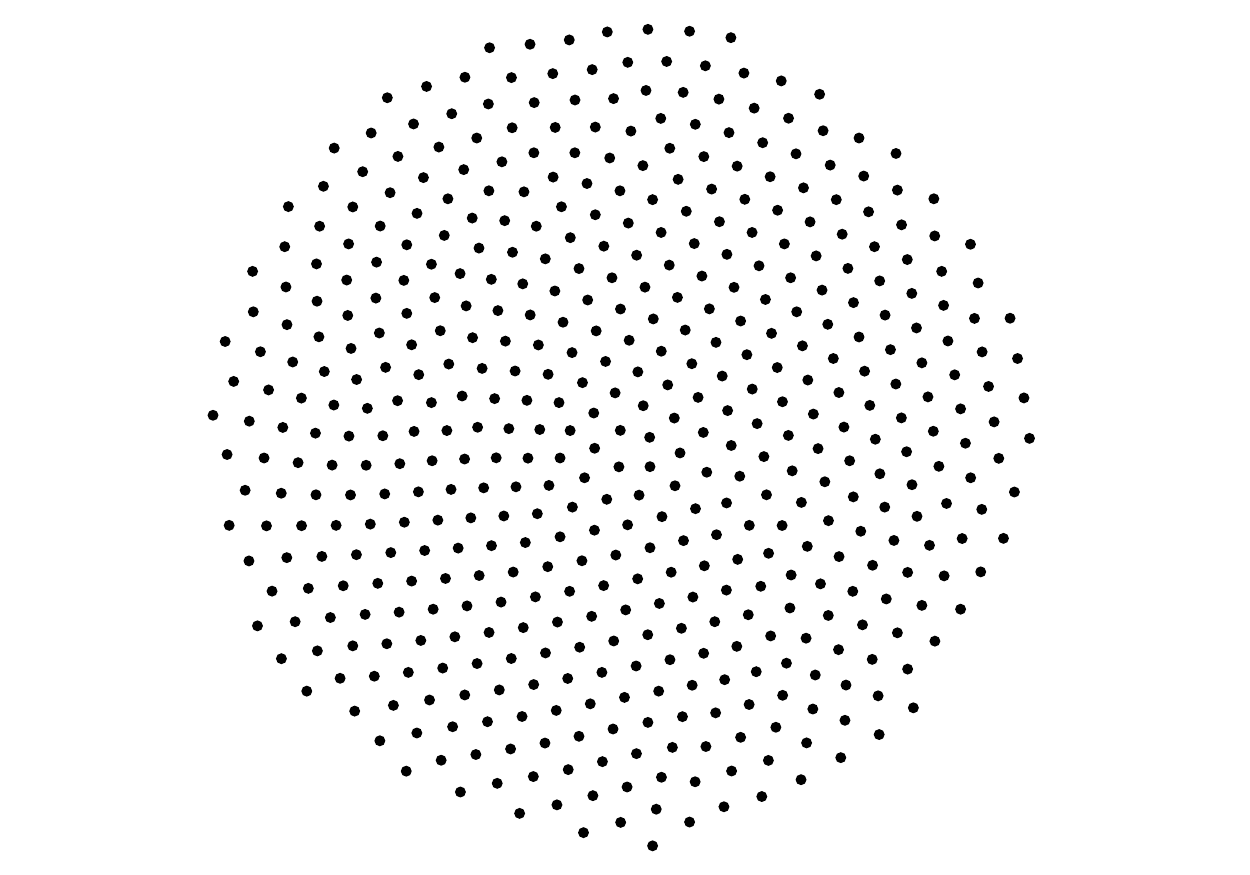}} &
\scalebox{0.23}{\includegraphics{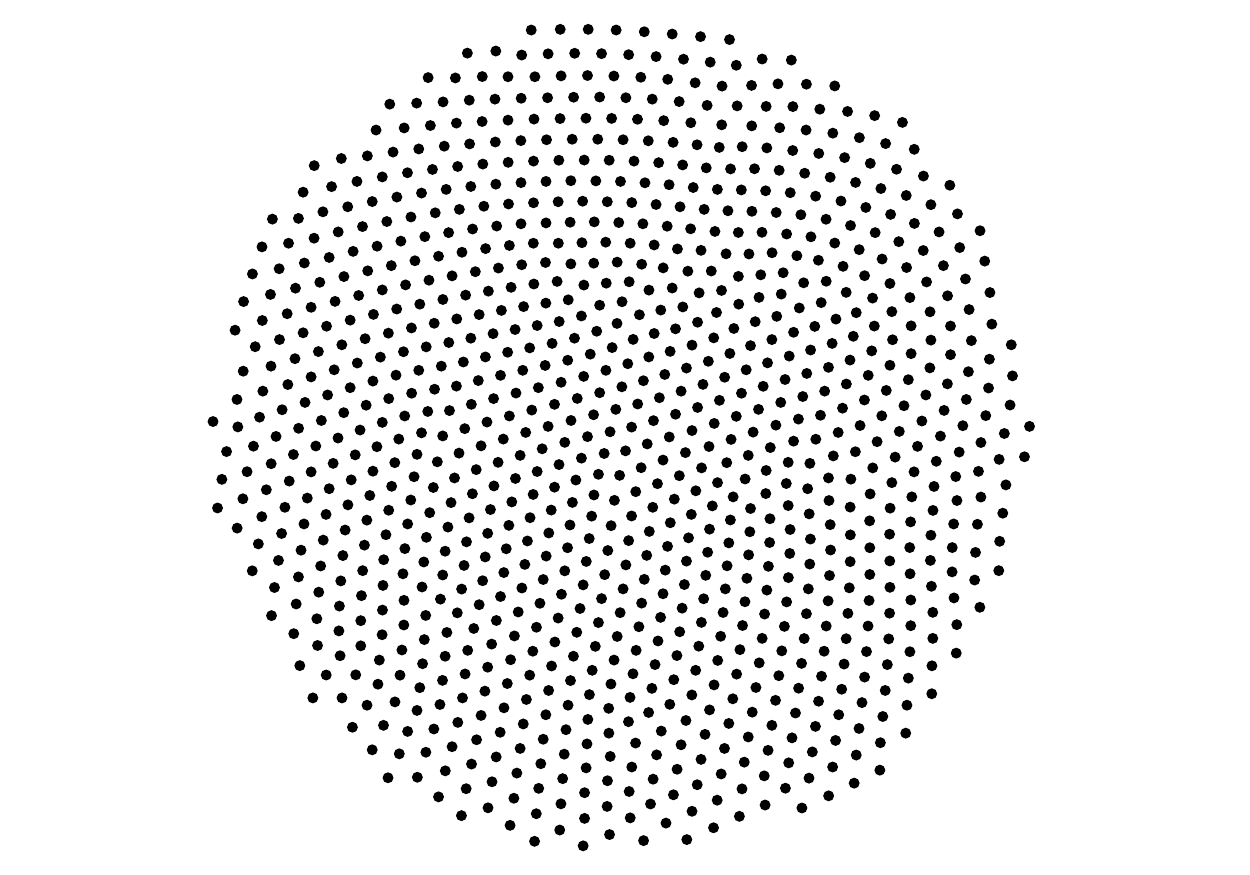}} &
\scalebox{0.23}{\includegraphics{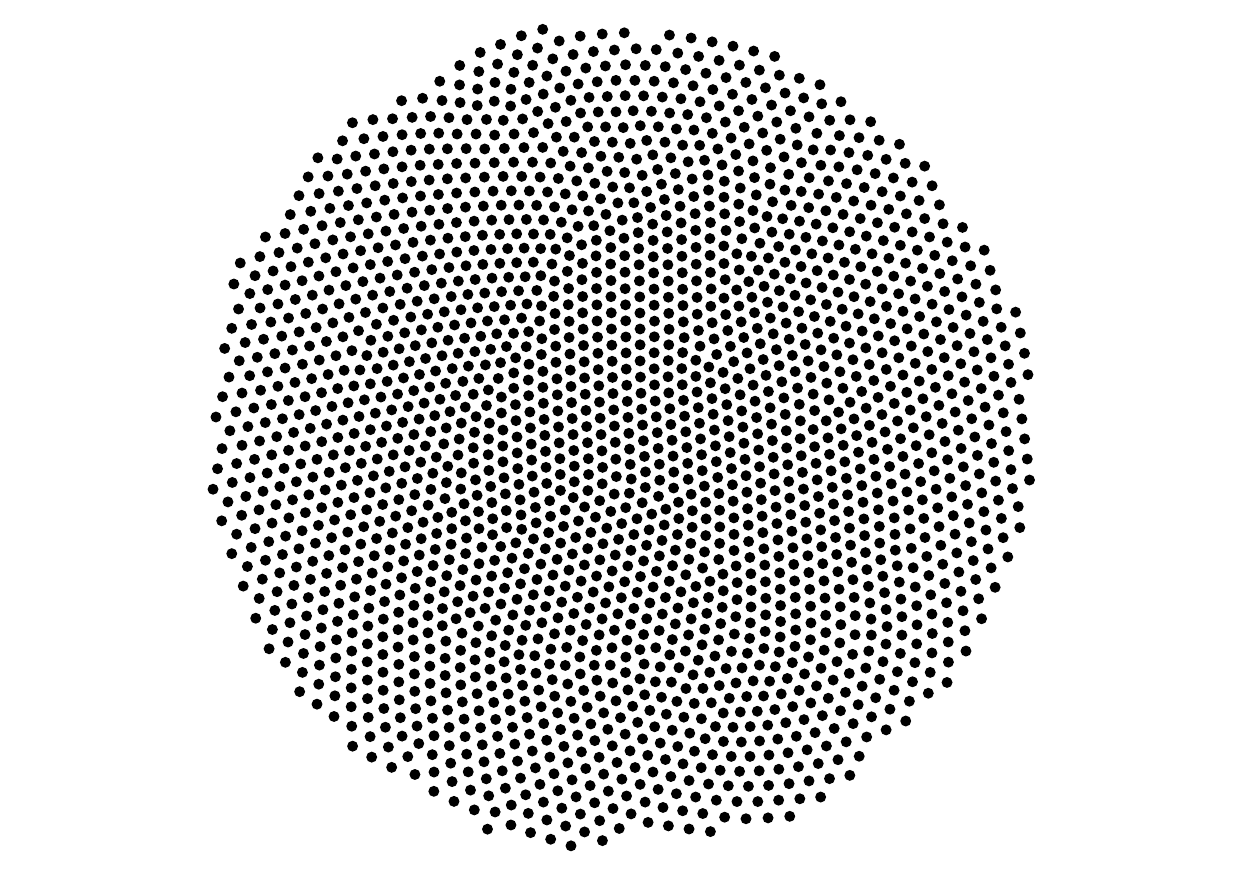}} \\\hline
$r\sim15.96$ & $r\sim 34.002 $ & $r\sim 47.35$ & $r\sim 64.45$
\\\hline
\end{tabular}
\caption{Stationary states and radius of their support as a
function of the number of particles for the potential $U(r)$ given
in equation \eqref{eq:potloglog}.}
\label{tab:morseH}
\end{table}

Numerical experiments in Table~\ref{tab:morseH} demonstrate that
the radius increases by increasing the number of particles, but
with a slower rate. As clearly visualized in
Figure~\ref{fig:radHmorse}(a), the radius appears to grow like a
square root function as the number of particles increases. This
observation is further supported by numerical evidence in
Figure~\ref{fig:radHmorse}(b), where we plot the square of the
radius versus the number of particles, and the linear regression
provides a good fit to the data.

It would be interesting to study the H-stable case in more
details, although it is outside of the scope of this paper. From
these numerical results, we can extract a conjecture that for the
continuum system the support of the density would go unbounded
over time, i.e., the confinement result should not hold for the
H-stable potential.

\begin{figure}
\centering
\subfigure[]{\includegraphics[scale=0.36]{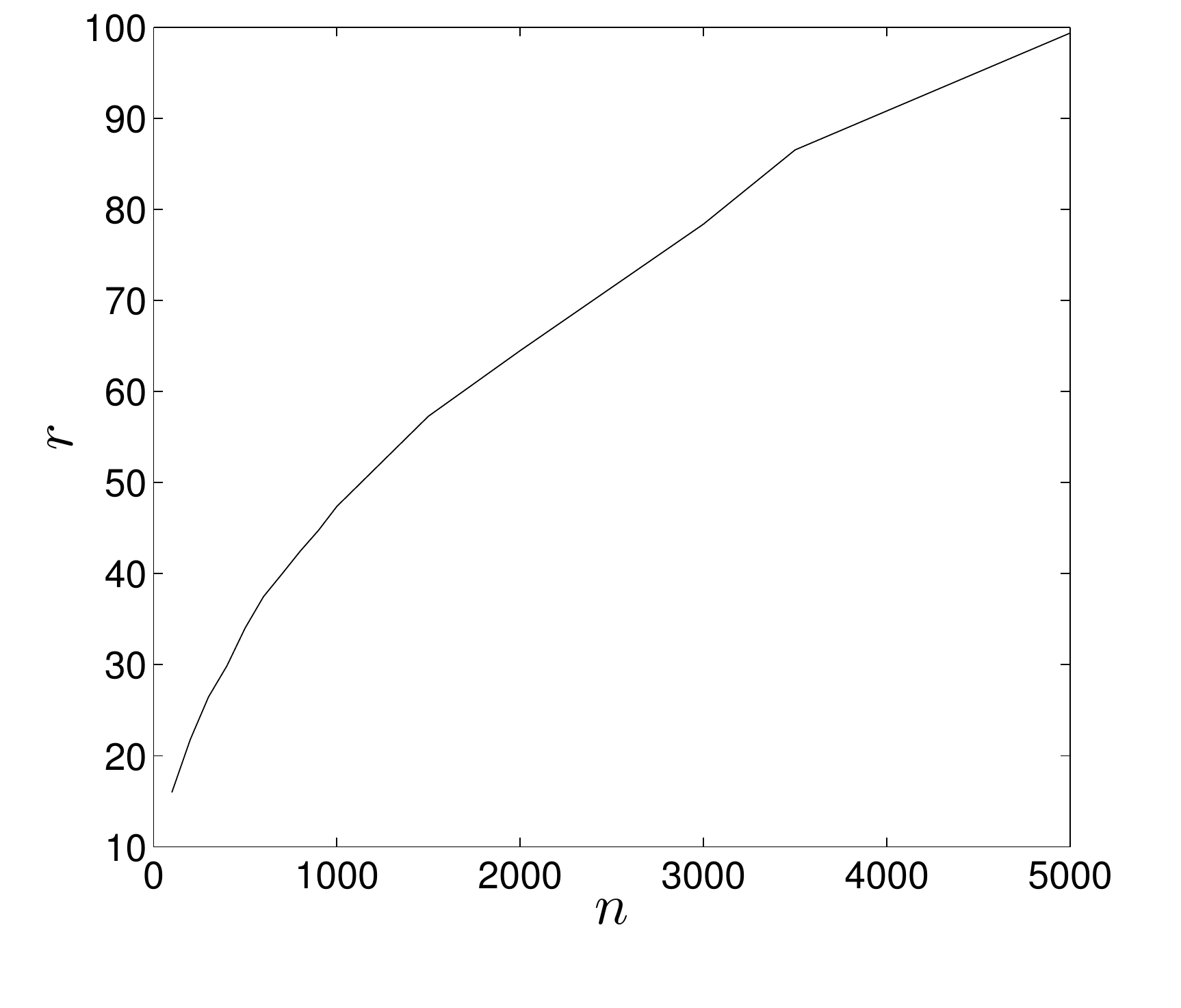}}
\subfigure[]{\includegraphics[scale=0.36]{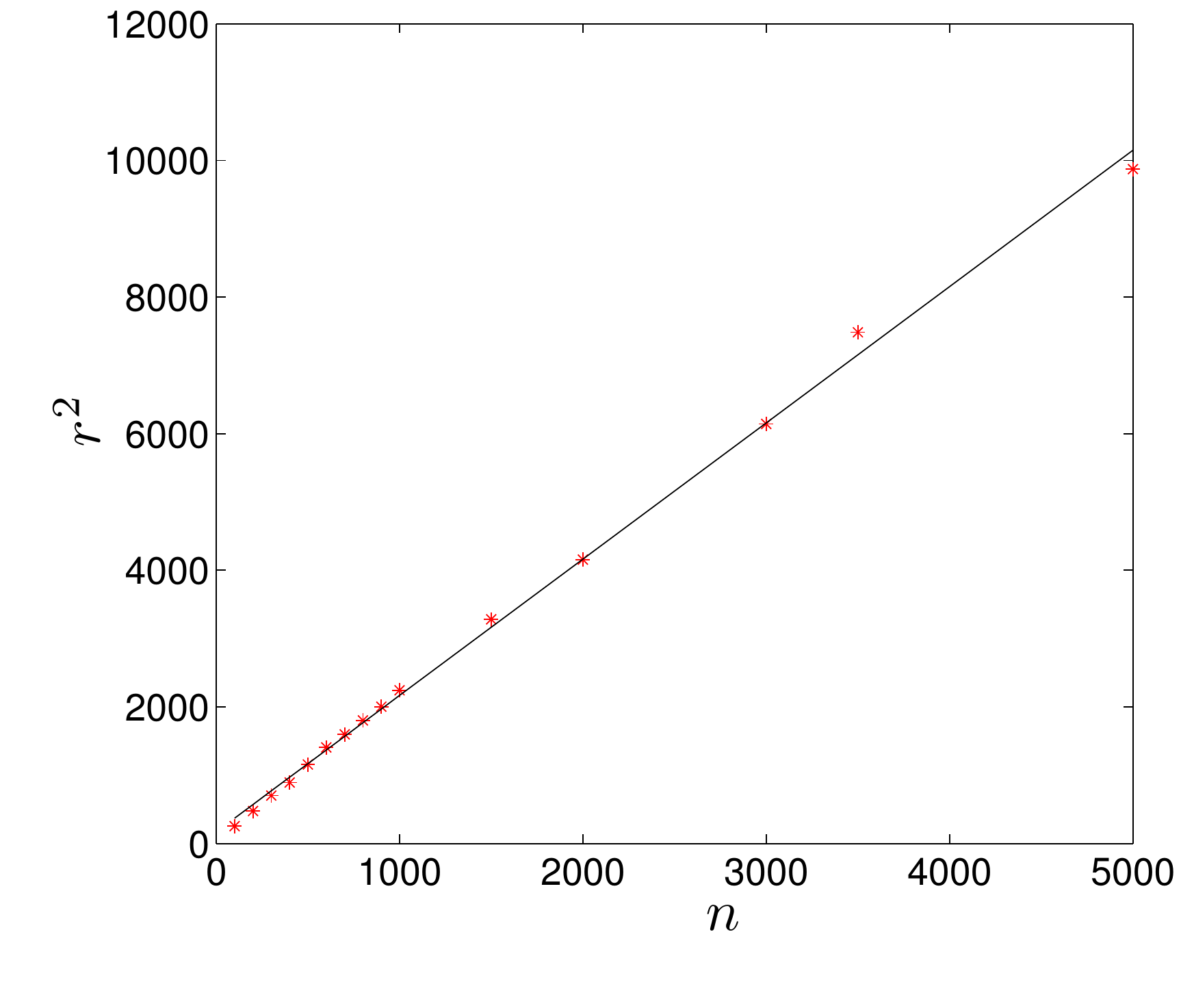}}
\caption{H-stable Case: (a) Evolution of the
radius as a function of $n$. (b) Squared radius of the support of
the steady state as a function of $n$ and the linear regression
curve $y=170.34+1.99n$ computed using the crossed points.}
\label{fig:radHmorse}
\end{figure}

\noindent {\bf Catastrophic case:} The parameters we choose for
the experiments are $C_A=l_A=1$, $C_R=1.3$ and $l_R=0.2$ so that
$Cl^2=0.052<1$.

\begin{table}[h]
\centering
\begin{tabular}{||c|c|c|c||}
\hline $n=100$ & $n=500$ & $n=1000$ & $n=2000$\\\hline
\scalebox{0.23}{\includegraphics{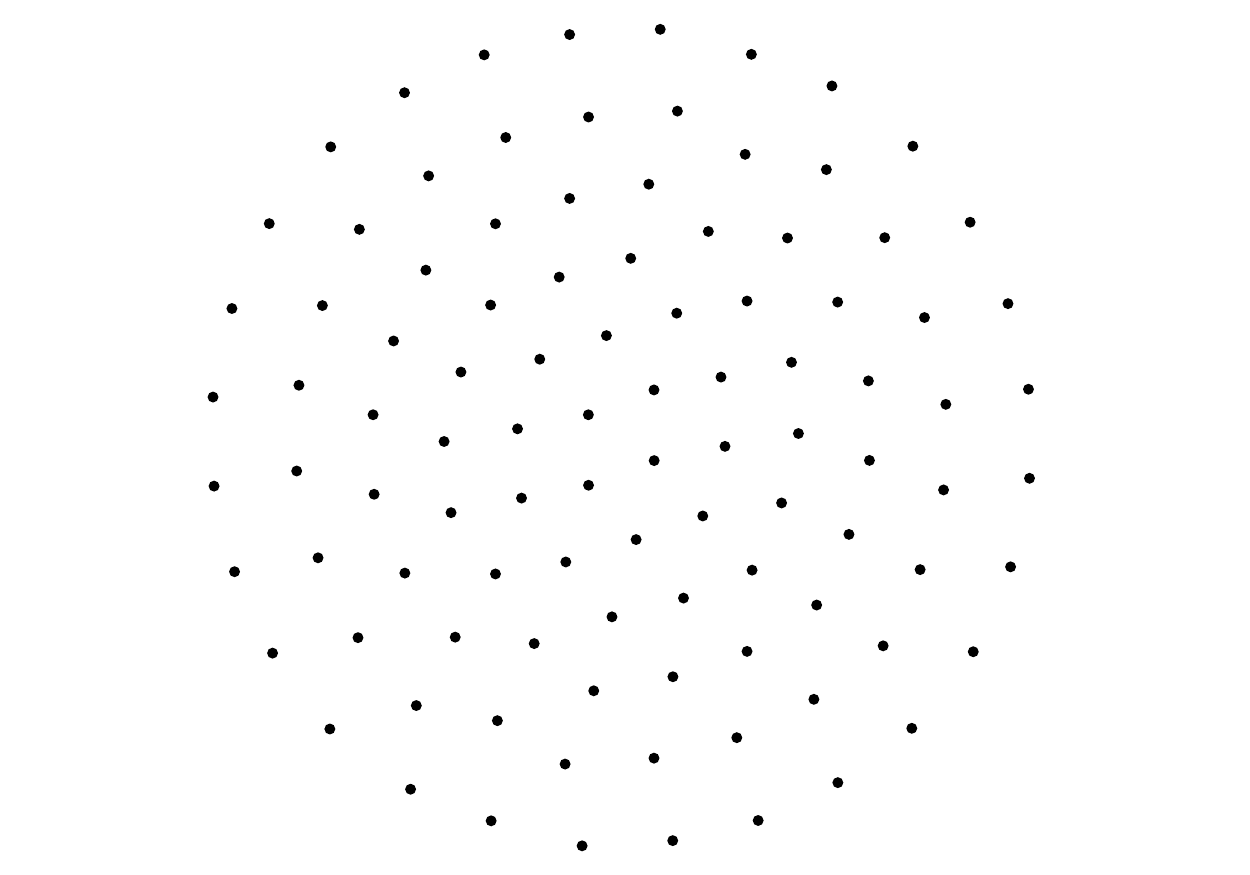}} &
\scalebox{0.23}{\includegraphics{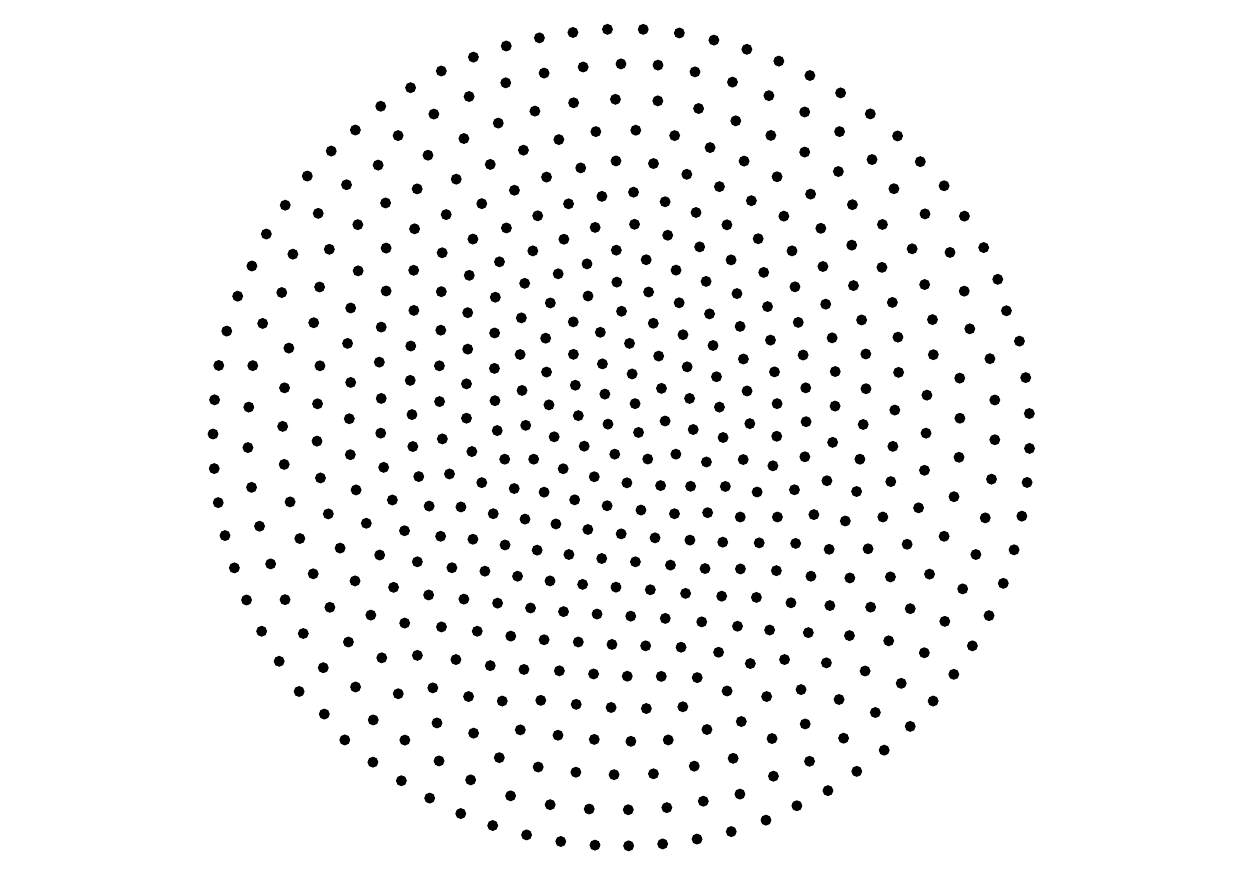}} &
\scalebox{0.23}{\includegraphics{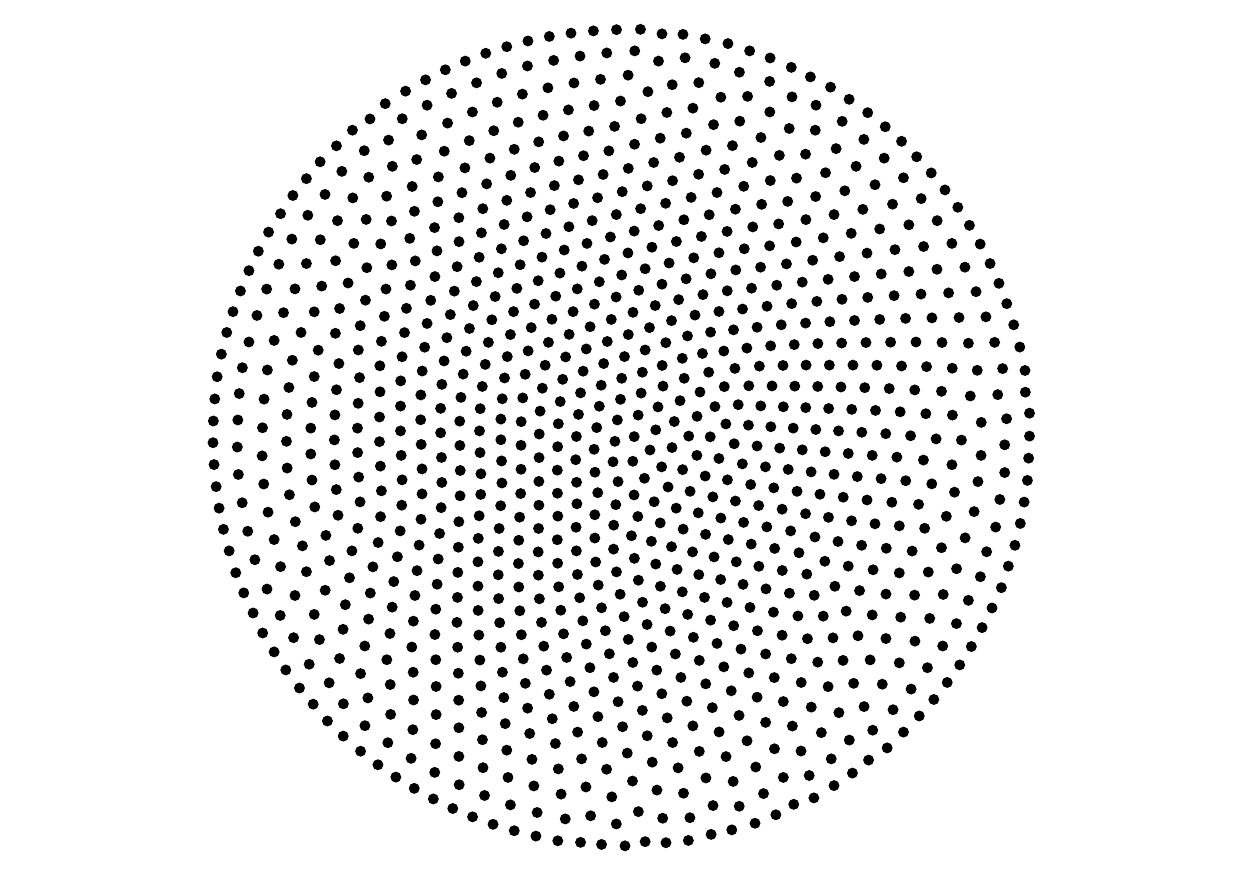}} &
\scalebox{0.23}{\includegraphics{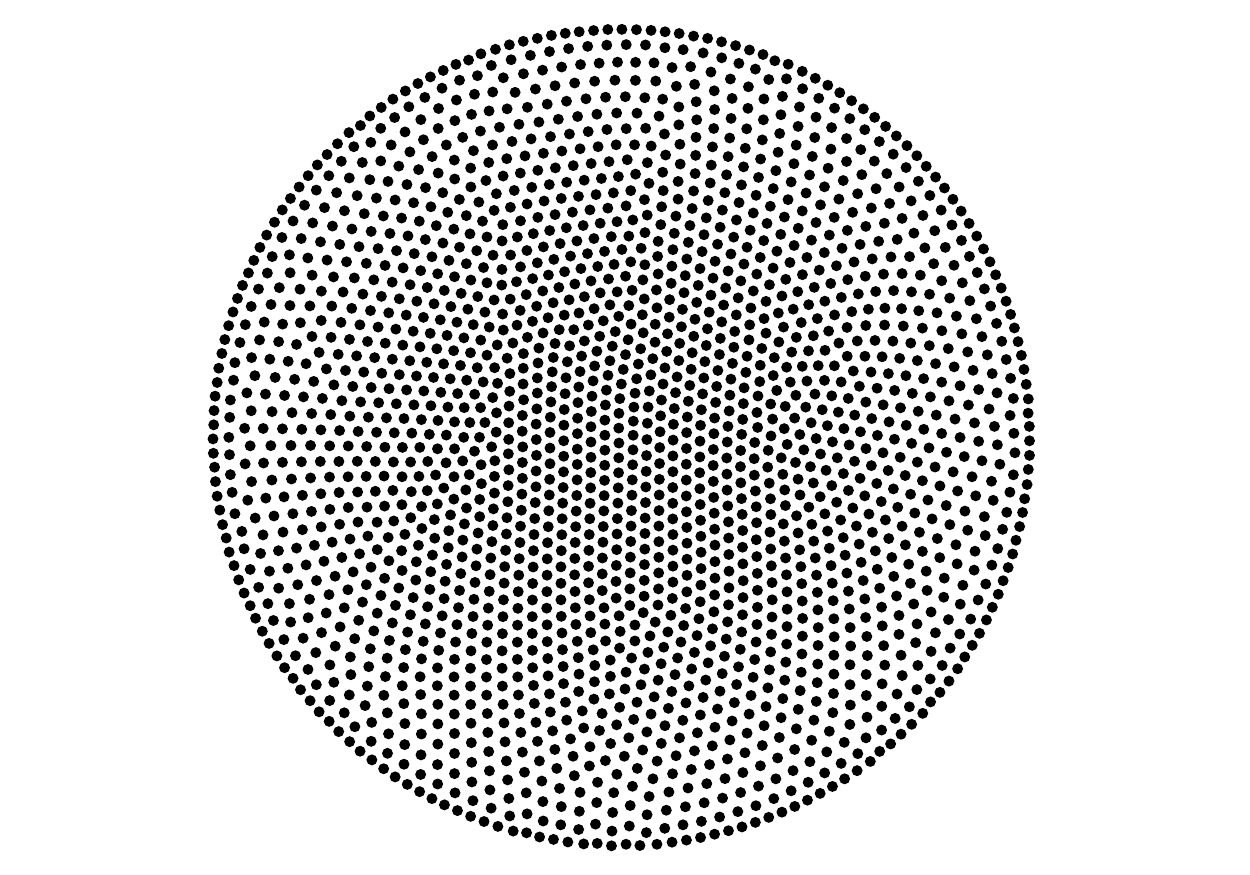}} \\\hline
$r\sim 0.5269 $ &  $r\sim 0.5480$&  $r\sim 0.5518$& $r\sim
0.5540$\\\hline
\end{tabular}
\caption{Stationary states and radius of their support as a
function of the number of particles for the potential $U(r)$ given
in equation \eqref{eq:potloglog}.} \label{tab:catmorse}
\end{table}

The results for this case are shown in Table~\ref{tab:catmorse}.
In contrast to the H-stable case, the radius of the support
converges to a limiting value. In
Figure~\ref{fig:evoradCATmorse}(a) we observe how the radius of
the support decreases in time to a limiting value with $n=1000$
particles, and in Figure~\ref{fig:evoradCATmorse}(b) we show how
the radius of the support of the stationary state increases and
converges to a certain value as a function of the number of
particles. We conclude that there should be confinement properties
for the Morse potential in the catastrophic case.

\begin{figure}[h]
\centering
\subfigure[]{\includegraphics[scale=0.36]{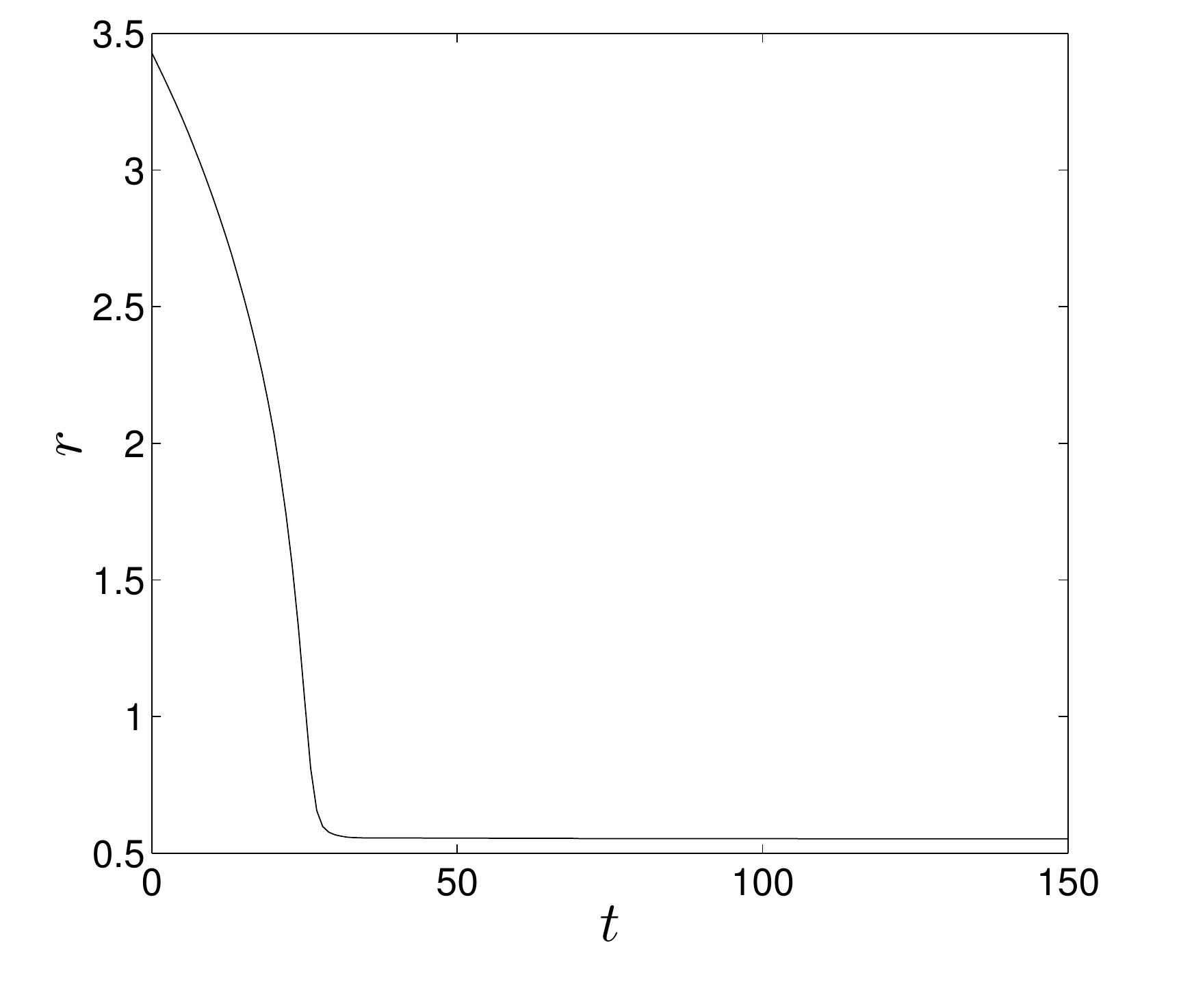}}
\subfigure[]{\includegraphics[scale=0.33]{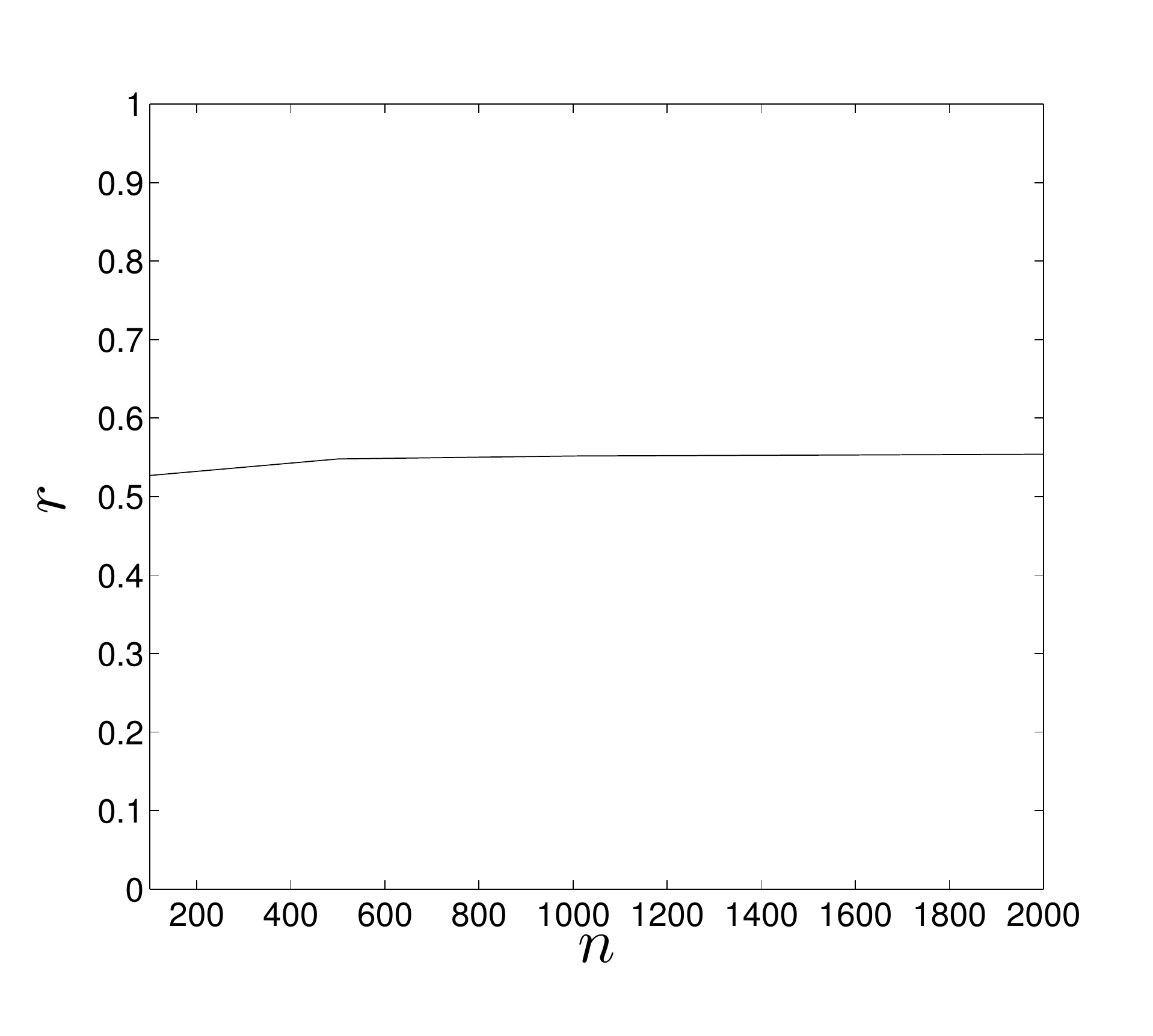}}
\caption{Catastrophic Case: (a) Evolution of the radius as a
function of $t$ for $n=1000$. (b) Evolution of the radius as a
function of $n$.} \label{fig:evoradCATmorse}
\end{figure}

The final goal would be to find replacements for the condition
\textbf{(NL-CONF)} in order to include the cases where $w(r)\to 0$
as $r\to \infty$. One possibility is to invoke scaling limits for
integrable potentials. More precisely, we scale the potential in
\eqref{pde} as
\begin{equation}
\rho_t = \nabla \cdot (\rho (\nabla W_\epsilon*\rho)), \qquad x
\in \R^N, t>0, \label{pdescaled}
\end{equation}
in such a way that $W_\epsilon (x)=\epsilon^{-N}W(x/\epsilon)$
approximates a Dirac Delta at 0 with certain weight as $\epsilon
\to 0$. Now, if the potential is such that
$$
\alpha:=\int_{\R^N} W(x)\,dx \,,
$$
then equation \eqref{pdescaled} is formally approaching
\begin{equation*}
\rho_t = \alpha \nabla \cdot (\rho \nabla \rho), \qquad x \in
\R^N, t>0.
\end{equation*}
In the H-stable case, the Morse potential satisfying $\alpha>0$
leads to a limiting nonlinear diffusive equation, which is
coherent with the no confinement property. In the catastrophic
case, the Morse potential satisfying $\alpha <0$ leads to a
limiting anti-diffusive nonlinear equation, which might also be
coherent with the confinement property of the potential. We
conjecture these integrability conditions might have some
implications for confinement properties of potentials.

\subsection*{Acknowledgments}
DB and JAC were supported by the projects  Ministerio de Ciencia e
Innovaci\'on MTM2011-27739-C04-02 and 2009-SGR-345 from Ag\`encia
de Gesti\'o d'Ajuts Universitaris i de Recerca-Generalitat de
Catalunya. JAC acknowledges support from the Royal Society through
a Wolfson Research Merit Award. YY was partially supported by NSF grant DMS-0970072. The authors would like to thank
Thomas Laurent for fruitful discussions.

\bibliographystyle{plain}
\bibliography{refs2}

\end{document}